\newtheorem{fact}{Fact}[section]
\newtheorem{theorem}[fact]{Theorem}
\newtheorem{definition}[fact]{Definition}
\newtheorem{example}[fact]{Example}
\newtheorem{rremark}[fact]{Remark}
\newenvironment{remark}{\begin{rremark} \rm}{\end{rremark}}
\newtheorem{conjecture}[fact]{Conjecture}
\DeclareMathOperator\N{\mathbf N} \DeclareMathOperator\C{\mathbf C}
\DeclareMathOperator{\CC}{\mathcal C} \DeclareMathOperator\Z{\mathbf
Z} \DeclareMathOperator\codim{codim} \DeclareMathOperator\spa{span}
\DeclareMathOperator\rk{rank}\DeclareMathOperator\PP{\mathbf P}
\DeclareMathOperator\q{\mathbf q}\DeclareMathOperator\w{\mathbf w}
\DeclareMathOperator{\DD}{\mathcal D}
\DeclareMathOperator{\diag}{diag}
\DeclareMathOperator{\Gr}{Gr}
\DeclareMathOperator{\Hom}{Hom}
\title{Equivariant classes of matrix matroid varieties}
\author{L. M. Feh\'er, A. N\'emethi, R. Rim\'anyi}
\dedicatory{Dedicated to the memory of T. Brylawski (1944-2007)}
\begin{document}

\begin{abstract} Consider an integer associated with every subset of the set of columns of an $n\times k$ matrix. The collection of those matrices for which the rank of a union of columns is the predescribed integer for every subset, will be denoted by $X_{\CC}$. We study the equivariant cohomology class represented by the Zariski closure $Y_{\CC}$ of this set. We show that the coefficients of this class are solutions to problems in enumerative geometry, which are natural  generalization of the linear Gromov-Witten invariants of projective spaces. We also show how to calculate these classes and present their basic properties.
\end{abstract}

\maketitle

\section{Introduction}
\subsection{Matroid representation varieties} Consider an integer associated with every subset of the set of columns of an $n\times k$ matrix. The collection of those complex matrices for which the rank of a union of columns is the predescribed integer for every subset, will be denoted by $X_{\CC}$. Our main object of study in this paper is the Zariski closure $Y_{\CC}$ of this set. This is a version of matroid representation varieties. Other versions (eg. contained in Grassmannians, instead of the affine space of matrices) are also known, and they are closely related to ours. A dual point of view is considering the hyperplanes determined by the column vectors of the matrices. From this point of view $Y_{\CC}$ is the parameter space of certain hyperplane arrangements.

Matroid representation varieties are universal objects in algebraic geometry in the sense that any complication of varieties can be modeled on them.
The precise statement of this universality theorem is called Mn\"ev's theorem, see \cite{mnev}, \cite{RG_onMnev}, or a  recent account in \cite{vakil_onMnev}. Hence one does not hope that any reasonable question on these varieties has an easy answer. One manifestation of this phenomenon is the determination of the ideal of these varieties. In Section~\ref{sec:ideal} below we will explain with examples how the generators of the ideal encode projective geometry theorems.

The problem we will consider about matroid varieties is an enumerative geometry problem, a generalization of the linear Gromov-Witten invariants of projective spaces. Suppose a matroid variety is given, as above. Consider $k$ generic linear subspaces $V_i$ in $\C^n$. The question is, how many $n\times k$ matrices exist that belong to our matroid variety such that the $i$'th column vector is in $V_i$. For example, after projectivizing, we can ask the following question: given 8 generic straight lines and a generic point in the projective plane, how many Pappus configurations exist with 8 points of the Pappus configuration belonging to the 8 lines, and the 9'th point coinciding with the given point. The precise definitions, and the answer are given below. In the special case, when the matroid variety is the variety of rank $\leq 2$ matrices of size $n\times k$, this enumerative question is equivalent to the determination of $k$-point linear Gromov-Witten invariants in projective spaces. For general matroid varieties, however, no classical geometric or Gromov-Witten-type methods are known to compute the generalized Gromov-Witten invariants.

The nature of the matroid Gromov-Witten invariants in $\PP^2$ can be visualized by pictures. Some interactive presentations, created with the Interactive Geometry Software Cinderella \cite{cinderella} can be found at www.unc.edu/\~{}rimanyi/matroid\_{}show.

We will show in Theorem \ref{coeff_GW} that our matroid versions of linear Gromov-Witten invariants can be computed through the equivariant classes $[Y_{\CC}]$. These are cohomology classes that the varieties $Y_{\CC}$ represent in the $GL(n)\times GL(1)^k$-equivariant cohomology ring of the vector space of $n\times k$ matrices.

\subsection{Equivariant classes represented by invariant varieties of a representation}
Let the group $G$ act on the complex vector space $V$, and let $Y\subset V$ be an invariant variety of complex codimension~$c$. Then $Y$ represents a cohomology class $[Y]\in H^{2c}_G(V)$ in equivariant cohomology. There are various definitions and names for this class, eg. equivariant Poincar\'e dual, Thom polynomial, multidegree. We will call it the equivariant class of $Y$. Since $H^*_G(V)$ is naturally isomorphic to the ring $H^*(BG)$ of $G$-characteristic classes, the equivariant class $[Y]$ is simply a $G$-characteristic class of degree $2c$.

The equivariant class of the variety $Y$ encodes a lot of geometric information on $Y$, let us just allude to the effectiveness of Schubert calculus (the Giambelli formula is such an equivariant class) or the generalization involving classes of quiver loci. Other applications include the enumerative geometry results coming from Thom polynomials of singularities, see eg. \cite{kleiman} for a classic review or \cite{quad} for a recent addition.

Let us remark that the equivariant class of a matroid representation variety can be interpreted as a class of a quiver loci for the ``star quiver'' (based on a star shaped graph). However, the equivariant study of quiver representations is only well understood for quivers of Dynkin type ADE, see \cite{tegez}, \cite{ks} and references therein.

The usual tools to calculate equivariant classes represented by invariant subvarieties involve equivariant resolution, equivariant degeneration, or equivariant localization techniques. These techniques require the understanding of the ideal or the singularities of the variety in question.
For matroid varieties we lack this essential information.

Another main approach to calculate equivariant classes, effective for equivariant classes of contact singularities as well, is an interpolation method described in \cite{cr}. Below we will study an improvement of this interpolation method. In essence, we will describe certain constraints that a particular $[Y_{\CC}]$ must satisfy. Some of these constraints originate from the topological arguments of \cite{cr}, some others from the enumerative interpretation of some coefficients. Finally, a third set of constraints follow from the analogue of a stabilization property recently proved for contact singularities \cite{dstab}. Overwhelming experience shows that these three sets of constraints are sufficient to determine the equivariant classes $[Y_{\CC}]$, providing several enumerative applications. However, at the moment, we have no theorem claiming that for a particular matroid a certain set of constraints is sufficient.

In Section \ref{dstab} we show a certain stabilization property connecting the equivariant classes of matroid varieties in different dimensions $n$. As a corollary we prove a vanishing theorem on certain coefficients. In Section \ref{calculation} we outline the method of calculating these invariants.

\medskip

A particularly interesting question, subject to future study, is whether the matroid Gromov-Witten invariants can be organized as structure constants of an algebraic object with some kind of associativity property---mimicking the construction of the (big) quantum cohomology ring.

\subsection{Acknowledgement} The authors would like to express their gratitude to T. Brylawski, S.~Fomin, A. Hrask\'o, and B. Sturmfels for helpful discussions and remarks; and to D. Adalsteinson for letting us use his computer cluster for our calculations.

\section{Matrix matroid varieties}

We will denote the set of natural numbers $\{0,1,\ldots\}$ by $\N$,
and the set $\{1,2,\ldots,k\}$ by $[k]$. For a set $X$ let $2^X$
denote its power set, ie. the set of subsets of $X$.
We will identify the vector spaces $(\C^n)^k$ and $\C^{n\times
k}$ by the rule
\begin{equation}\label{identify}
(v_1,\ldots,v_k) \leftrightarrow
\begin{pmatrix} \begin{pmatrix} \ \\ v_1 \\ \ \end{pmatrix} \ldots
\begin{pmatrix} \ \\ v_k \\ \ \end{pmatrix} \end{pmatrix}
\end{equation}
Elements in $(\C^n)^k$ will be referred to as (ordered) {\em vector
configurations} in $\C^n$. For a matrix $M\in \C^{n\times k}$ and subsets $U \subset [n]$,
$V\subset [k]$, let $M^U_V$ denote the submatrix consisting of the
$(i,j)$-entries of $M$ for $i\in U$, $j \in V$. Let $M_V=M_V^{[n]}$.

\medskip

The vector configuration $\CC=(v_1,v_2,\ldots,v_k) \in \left( \C^n
\right)^k$ defines the rank function $r_{\CC}:2^{[k]} \to \N$,
$$r_{\CC}(V)=\dim \spa \{v_i\}_{i\in V}.$$

\begin{definition}
For a configuration $\CC$ we define
$$X_{\CC}=\{M \in \C^{n \times k}: \rk (M_V) = r_{\CC}(V) \text{\ for all\ } V\subset [k]\}.$$
The Zariski closure $\overline{X_{\CC}}\subset \C^{n\times k}$ will
be called the {\em matrix matroid variety} associated with $\CC$,
and will be denoted by $Y_{\CC}$.
\end{definition}

If we identify $n\times k$ matrices with $k$-tuples of $n$-vectors
as in (\ref{identify}), then $X_{\CC}$ consists of those
configurations whose rank function is the same as that of $\CC$. For
example, $\CC$ itself belongs to $X_{\CC}$. The matrix matroid
variety $Y_{\CC}$ consists of those configurations that are limits
(degenerations) of elements in $X_{\CC}$.

Observe that $X_{\CC}$ and $Y_{\CC}$ do not change if we re-scale,
ie. multiply, any vector $v_i$ in $\CC$ by any non-zero complex
number. Hence $X_{\CC}$ and $Y_{\CC}$ are determined by the list of
points $P_i:=[v_i]\in \PP^{n-1}$ for  $v_i\not=0$, and the list of
those $v_i$ which are 0. By abusing language, such a list will also
called a configuration.

\begin{example} \label{ex: concrete321}
\rm Let $n=2, k=6$, and consider the following configuration
$$P_1=P_2=P_3=(0:1)\in \PP^1, \qquad P_4=P_5=(1:1)\in \PP^1, \qquad v_6=0.$$
This configuration is illustrated in Figure \ref{figure1}.
\begin{figure}
\epsffile{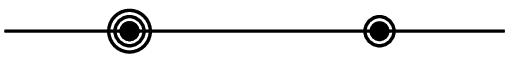} \put(-175,0){$P_1, P_2, P_3$}
\put(-90,0){$P_4,P_5$} \put(-15,25){$\emptyset$} \put(-15,0){$P_6$}
\caption{ } \label{figure1}
\end{figure}
Matrices in $X_{\CC}$ are those $2\times 6$ matrices whose
\begin{itemize}
\item{} first three columns are proportional {\em non-zero} vectors,
\item{} the forth and fifth columns are proportional {\em non-zero}
vectors,
\item{} {\em the first and the forth columns are non-proportional},
\item{} the sixth column is the zero vector.
\end{itemize}
It is true that $Y_{\CC}$ consists of matrices satisfying the
``closed'' conditions above, but not necessarily the ``open'' ones.
That is, $Y_{\CC}$ consists of matrices whose first three columns
are proportional, forth and fifth columns are proportional, and
sixth column is 0. However, the easy procedure of
dropping the open conditions will {\em not} specify $Y_{\CC}$ in general.
\end{example}

\begin{example} \label{ex:matrix schubert varieties}
 \rm {\em Matrix Schubert varieties.} Consider a
complete flag of linear spaces
$$L^0 \subset L^1 \subset \ldots \subset L^{n-1}\subset L^n$$
in $\C^n$. Let $\ell=(l_0,l_1,\ldots,l_n)\in \N^{n+1}$ with $\sum
l_i=k$. Choose $l_i$ generic points $v^{(i)}_1,\ldots,v^{(i)}_{l_i}$
in $L^i$. The matrix matroid variety corresponding to the
configuration
$$\CC_{\ell}=\left(v^{(0)}_1,\ldots,v^{(0)}_{l_0},v^{(1)}_1,\ldots,v^{(1)}_{l_1}
,\ldots,v^{(n)}_1,\ldots,v^{(n)}_{l_n}\right)$$ is studied in the
papers \cite[Sect.5]{ss}, \cite{knutson-miller-ss}, and is called
the matrix Schubert variety corresponding to Grassmannian
permutations.
\end{example}

For the problems to be considered later in this paper, matrix
Schubert varieties will be the simple case. Products of matrix
Schubert varieties will also be considered simple. Note, that
Example \ref{ex: concrete321} is such a product of matrix Schubert
varieties (after identifying $\C^{n\times k_1}\times \C^{n \times
k_2}$ with $\C^{n\times (k_1+k_2)}$), namely,
\begin{equation}\label{decomposition321}
Y_{\CC}=Y_{\CC_{(0,3,0)}} \times Y_{\CC_{(1,2,0)}}=Y_{\CC_{(0,3,0)}}
\times Y_{\CC_{(0,2,0)}}\times Y_{\CC_{(1,0,0)}}.\end{equation}

Examples of matrix matroid varieties which are not products of matrix
Schubert varieties will be given below.

\begin{remark} Other candidate names for matrix matroid varieties
would be  ``matroid variety'' or ``matroid representation variety''.
We chose the name ``matrix matroid variety'', because of the analogy
with matrix Schubert varieties.
\end{remark}

\section{The ideal of matrix matroid varieties}\label{sec:ideal}

The rank of a matrix is $r$ if its
$(r+1)\times (r+1)$ minors vanish, and at least one $r \times r$
minor does not vanish. Hence the algebraic description of $X_{\CC}
\subset \C^{n\times k}$ is a collection of equations (several minors
vanish), together with a collection of conditions expressing that
certain polynomials (some other minors) do not vanish together
(cf.~the open and closed conditions of Example \ref{ex:
concrete321}):
\begin{align*} X_{\CC}=\{ M=(m_{i,j}) \in \C^{n \times k} : \ &
p_u(m_{i,j})=0\ \text{for}\
u=1,2,\ldots; \\
& q^{(v)}_1(m_{i,j}),\ldots, q^{(v)}_{w_v}(m_{i,j})\  \text{do not
vanish together for}\ v=1,2,\ldots \}.\end{align*} It follows that
\begin{equation} \label{naive}
Y_{\CC} \subset \{M=(m_{i,j}) \in \C^{n \times k} :p_u(m_{i,j})=0\
\text{for}\ u=1,2,\ldots \}.
\end{equation}
Although it is tempting to think that we have equality in formula
(\ref{naive}), in general, this is not the case. First we give an
intuitive reason for this.

\subsection{Motivation: the Menelaus configuration}\label{menelaus_section}

 Consider the Menelaus
configuration $\CC_{M}$ of Figure \ref{menelaus ceva_figure}, with $n=3$,
$k=6$.

\begin{figure} \epsfysize=1.5in
\epsffile{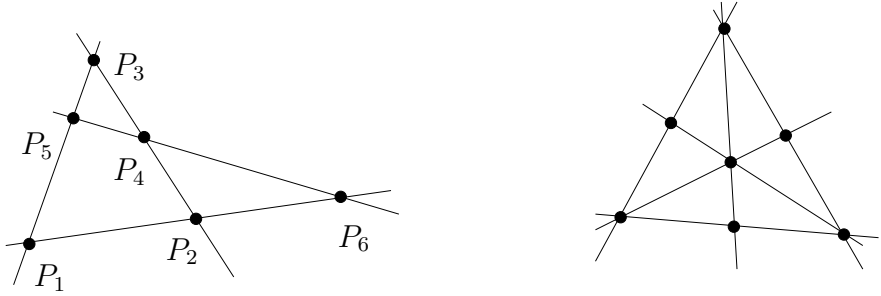}
\put(-320,0){$P_1$}
\put(-270,10){$P_2$}
\put(-290,80){$P_3$}
\put(-290,40){$P_4$}
\put(-325,50){$P_5$}
\put(-205,15){$P_6$}
\caption{The Menelaus configuration $\CC_M$, and the Ceva configuration $\CC_C$}
\label{menelaus ceva_figure}
\end{figure}

The
equations $p_u$ of formula (\ref{naive}) are the four $3 \times 3$
minors of the $3\times 6$ matrix $(m_{i,j})$ corresponding to the
following triples of column-indices: 126, 135, 234, 456. The right
hand side of formula (\ref{naive}) hence contains all $3\times 6$
matrices for which these four minors vanish. We claim that there is
a matrix for which these minors vanish, but it is {\em not} in
$Y_{\CC_M}$, ie. it is not a limit of matrices from $X_{\CC_M}$.
Indeed, consider an affine chart of $\PP^2$, a line $l$ in it, and
the configuration $\CC'$ of six generic points on $l$. If this
configuration was in the closure of $X_{\CC_M}$ then there would be
a family of configurations belonging to $X_{\CC_M}$, all in the
affine chart, converging to $\CC'$. For all these configurations
Menelaus' theorem \cite{menelaus_original} holds, which we recall
now.

\begin{theorem}\label{menelaus_affine}{\em [Complex affine version of Menelaus's
theorem]}
Consider the configuration $\CC_M$ of points in $\C^2$. Choose an
identification of the $P_1 P_3 P_5$ line with $\C$. Observe that the
complex number $( P_5-P_1 )/(P_3-P_5)$  does not depend on the
choice; denote this ratio by $P_1P_5/P_5P_3$. Then (using similar
notations for the other straight lines) we have
\begin{equation}\label{menelaus identity}\frac{P_1P_5}{P_5P_3} \cdot \frac{P_3P_4}{P_4P_2} \cdot \frac{P_2P_6}{P_6P_1} = -1.\end{equation}
\end{theorem}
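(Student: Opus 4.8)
The plan is to reduce the entire identity to a single affine-linear functional attached to the transversal line. First I would check that each of the three ratios in \eqref{menelaus identity} is well defined, i.e. independent of the chosen identification of the relevant line with $\C$. Any two such identifications of a one-dimensional affine subspace differ by a complex affine isomorphism $z\mapsto az+b$ with $a\neq 0$; since this scales every difference of collinear points by the common factor $a$, a quotient such as $(P_5-P_1)/(P_3-P_5)$ is unchanged. Hence the three ratios are genuine complex numbers depending only on the configuration. Note also that among the four collinearities encoded by the vanishing minors $126,135,234,456$, the triple $456$ provides a line containing $P_4,P_5,P_6$, while $P_1=\overline{P_1P_2}\cap\overline{P_1P_3}$, $P_2$, $P_3$ are the vertices of a nondegenerate triangle whose sides $\overline{P_2P_3}$, $\overline{P_1P_3}$, $\overline{P_1P_2}$ are met by the transversal at $P_4,P_5,P_6$ respectively.

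Let $\ell:\C^2\to\C$ be an affine-linear function whose zero locus is the line through $P_4,P_5,P_6$. Genericity guarantees that no vertex of the triangle lies on the transversal, so $\ell(P_1),\ell(P_2),\ell(P_3)$ are all nonzero. The key observation is that $\ell$, being affine, restricts to an affine function along any line. Parametrizing the side by $P(t)=(1-t)P_2+tP_3$ gives $\ell(P(t))=(1-t)\ell(P_2)+t\,\ell(P_3)$, so the unique zero is attained at $t_4=\ell(P_2)/(\ell(P_2)-\ell(P_3))$, whence a short computation yields $P_3P_4/P_4P_2=(1-t_4)/t_4=-\ell(P_3)/\ell(P_2)$. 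The same argument on the other two sides gives $P_1P_5/P_5P_3=-\ell(P_1)/\ell(P_3)$ and $P_2P_6/P_6P_1=-\ell(P_2)/\ell(P_1)$.

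Multiplying these three identities, the three arguments $\ell(P_1),\ell(P_2),\ell(P_3)$ cancel telescopically and the sign factors combine to $(-1)^3=-1$, which is exactly \eqref{menelaus identity}. (An equivalent route avoids choosing $\ell$ altogether: writing $P_4,P_5,P_6$ in barycentric coordinates relative to $P_1,P_2,P_3$, the collinearity of $P_4,P_5,P_6$ is the vanishing of a $3\times 3$ determinant, and substituting this single relation into the product of ratios again collapses it to $-1$.) The computation itself is elementary; I expect the only delicate point to be the bookkeeping of signs, namely tracking which vertex is ``near'' and which is ``far'' on each side so that the cancellation produces exactly $-1$ rather than $+1$. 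This lone surviving sign is precisely what separates the Menelaus relation (a transversal) from the Ceva relation (concurrent cevians). Over $\C$ it carries no metric meaning: the individual ratios may be arbitrary nonzero complex numbers, and the only constraint forced by the collinearities is the relation above.
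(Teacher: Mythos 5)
Your proof is correct, but note that the paper contains no proof of this statement to compare against: Theorem \ref{menelaus_affine} is \emph{recalled} as a classical result (with a citation to the original source) and is only \emph{used} in Section \ref{menelaus_section}, namely to argue that six generic collinear points violate the identity (\ref{menelaus identity}) and hence that the inclusion (\ref{naive}) is strict for $\CC_M$. Your affine-functional argument is therefore a genuine, self-contained substitute for the citation, and it is sound: choosing an affine form $\ell$ vanishing on the transversal through $P_4,P_5,P_6$, using that $\ell$ composed with an affine parametrization of each side is affine in the parameter, and solving for the intersection parameter indeed yields $P_1P_5/P_5P_3=-\ell(P_1)/\ell(P_3)$, $P_3P_4/P_4P_2=-\ell(P_3)/\ell(P_2)$, $P_2P_6/P_6P_1=-\ell(P_2)/\ell(P_1)$, and the product telescopes to $(-1)^3=-1$ with the sign conventions of the theorem. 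Two points you leave implicit are worth making explicit, though neither is a gap: (i) your formula for $t_4$ presupposes $\ell(P_2)\neq\ell(P_3)$; this is automatic because $P_4$ lies on both the side $\overline{P_2P_3}$ and the transversal, so $\ell\circ P$ has a zero and cannot be the nonzero constant it would be if $\ell(P_2)=\ell(P_3)$ (and cyclically for the other two sides); (ii) the genericity you invoke --- nondegeneracy of the triangle $P_1P_2P_3$, no vertex on the transversal, and the marked points being distinct from the vertices so that all three ratios are defined and nonzero --- is exactly what the rank function of $\CC_M$ encodes, since by definition of $X_{\CC_M}$ only the four prescribed $3\times 3$ minors ($126$, $135$, $234$, $456$, in the paper's homogeneous-coordinate description) vanish.
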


Our reasoning is finished by observing that for $\CC'$ the Menelaus
identity (\ref{menelaus identity}) does not hold; this proves that
for $\CC_M$ we have strict $\subset$ in Formula (\ref{naive}).

One may wonder if there is a complex projective version of
Menelaus's theorem, which would eliminate the need for the affine
chart in the geometric proof above. The answer is given in the next
section.

\bigskip

What we learned from the Menelaus example is that
\begin{itemize}
\item{} the ``naive'' equations $p_u$ are not enough to cut out
$Y_{\CC}$ even set-theoretically from $\C^{n\times k}$;
\item{} the extra equations needed (besides the naive ones) encode
the not-so-obvious geometric theorems of the configuration.
\end{itemize}

\subsection{The ideal of $X_{\CC}$, examples.} \label{examples:ideal}

Let $I_{\CC}$ denote the ideal of the variety $Y_{\CC}$, ie. the
homogeneous ideal of polynomials vanishing on $Y_{\CC}$.

\begin{example}\rm In Schubert calculus the following statement is well
known: For the matrix Schubert variety $\CC_{\ell}$ of Example
\ref{ex:matrix schubert varieties} the ``naive'' equations generate
$I_{\CC_{\ell}}$:
$$I_{\CC_{\ell}}=\left(\det \left(M^{\{i_1,\ldots,i_s\}}_{\{j_1,\ldots,j_s\}}\right)=0\right)$$
for $i_1<\ldots< i_s, j_1<\ldots<j_s, j_s\leq
l_0+\ldots+l_{s-1},{s=1,\ldots,n}$.
\end{example}

Now consider the Menelaus configuration of Section
\ref{menelaus_section}, and consider the variety corresponding to
the naive equations
$$Y_{naive}=\{M \in \C^{3\times 6}\ :\ \det M_{126}=0, \det M_{135}=0, \det M_{234}=0, \det M_{456}=0\}.$$
Computer algebra packages \cite{singular} can be used to find that this variety is
the union of two irreducible varieties
\begin{equation}\label{menelaus_decomposition}
Y_{naive}=Y_{\CC_M} \cup \{M \in\C^{3\times 6}\ :\
\rk(M)\leq 2\}.\end{equation} This decomposition sheds light on the
intuitive reasoning of Section \ref{menelaus_section}. As a
byproduct, the computer algebra package finds generators of the
ideal $I_{\CC_M}$. It turns out that $I_{\CC_M}$ can be minimally
generated by polynomials of degrees
$$3,3,3,3,\ 5,5,5,\ {\underbrace{{6,\ldots,6}}_{12}}.$$
The four degree 3 polynomials can be chosen to be the four naive
equations. As a consequence, the collection of the other equations
can be considered as the {\em extra}, non-trivial complex projective
identities holding for Menelaus configurations. We might as well
call this set of polynomials the ``complex projective Menelaus'
theorem''.

For completeness let us show how to generate degree 6 and degree 5
polynomials in $I_{\CC_M}$ knowing only the usual version of
Menelaus' theorem (Theorem \ref{menelaus_affine}). In the projective
plane $(x,y,z)$ we can choose the $y$ coordinate to be at infinity.
In the remaining affine plane we can identify ratios of complex
numbers by appropriate projections. Hence, for example, from
(\ref{menelaus identity}) we can obtain
\begin{equation}\label{elso}
\frac{{x_5}/{y_5}-{x_1}/{y_1}}{{x_3}/{y_3}-{x_5}/{y_5}} \cdot
\frac{{z_4}/{y_4}-{z_3}/{y_3}}{{z_2}/{y_2}-{z_4}/{y_4}}
\cdot\frac{{z_6}/{y_6}-{z_2}/{y_2}}{{z_1}/{y_1}-{z_6}/{y_6}}=-1.
\end{equation}
Rearranging this equality we obtain a degree 6 polynomial. Making
other choices we may obtain several other degree 6 polynomials.
Getting degree 5 ones is more delicate. Consider the degree 6
polynomial obtained from (\ref{elso}), and the ones obtained from
the next three similar equalities
\begin{equation*}
\frac{{z_5}/{y_5}-{z_1}/{y_1}}{{z_3}/{y_3}-{z_5}/{y_5}} \cdot
\frac{{z_4}/{y_4}-{z_3}/{y_3}}{{z_2}/{y_2}-{z_4}/{y_4}}
\cdot\frac{{x_6}/{y_6}-{x_2}/{y_2}}{{x_1}/{y_1}-{x_6}/{y_6}}=-1,
\end{equation*}
$$\frac{{x_1}/{y_1}-{x_2}/{y_2}}{{x_2}/{y_2}-{x_6}/{y_6}} \cdot
\frac{{z_6}/{y_6}-{z_4}/{y_4}}{{z_4}/{y_4}-{z_5}/{y_5}}
\cdot\frac{{z_5}/{y_5}-{z_3}/{y_3}}{{z_3}/{y_3}-{z_1}/{y_1}}=-1,$$
$$\frac{{z_1}/{y_1}-{z_2}/{y_2}}{{z_2}/{y_2}-{z_6}/{y_6}} \cdot
\frac{{z_6}/{y_6}-{z_4}/{y_4}}{{z_4}/{y_4}-{z_5}/{y_5}}
\cdot\frac{{x_5}/{y_5}-{x_3}/{y_3}}{{x_3}/{y_3}-{x_1}/{y_1}}=-1.$$
It turns out that the sum of these four degree 6 polynomials is
$y_4$ times
\begin{equation}\label{deg5_generator} -x_5
y_1z_3z_6y_2+x_5 y_1 z_3 z_2 y_6+x_3 y_5 z_2 z_1 y_6-x_5 y_3 z_2 z_1
y_6+ z_1 y_5 z_3 x_6 y_2-z_1 y_5 z_3 x_2 y_6-\end{equation}
$$z_3
y_5 z_2 x_6 y_1+z_5 y_3 z_2 x_6 y_1+ x_2 y_1 z_6 z_3 y_5-x_2 y_1 z_6
z_5 y_3-x_6 y_2 z_5 z_1 y_3+$$
$$x_2 y_6 z_5 z_1 y_3- z_1 y_2 z_6 x_3 y_5+z_1 y_2 z_6 x_5 y_3+z_6
y_2 z_5 x_3 y_1-z_2 y_6 z_5 x_3 y_1.$$ This latter is one of the
degree 5 generators of $I_{\CC_M}$. The other two can be obtained by
similar calculations, or appropriate changes of variables in
(\ref{deg5_generator}).

\bigskip

For an arbitrary configuration $\CC$ determining $I_{\CC}$ seems to be a
hopelessly difficult problem. One might start with the ideal generated by
the naive equations, and try to get rid of the ``fake'' components
(just like the determinantal variety in
(\ref{menelaus_decomposition})) by primary decomposition or by
dividing (or saturating) with ideals of extra components. In
practice, none of these strategies is feasible in reasonable time
for configurations even a little more complicated than the Menelaus
configuration.

\smallskip

Below we will study another invariant of matrix matroid varieties,
namely their equivariant classes, which will be much better
computable than $I_{\CC}$, and which can answer various questions
about these varieties without determining their ideals.

\section{Equivariant classes of matrix matroid varieties.}

We will work in the complex algebraic category; cohomology will
be meant with integer coefficients; and $GL(n)$ will denote the general linear group $GL(n,\C)$.

\subsection{Equivariant classes in general}

If $Y$ is a complex codimension $c$ subvariety in a compact complex manifold
$M$, then $Y$ represents a cohomology class $[Y]$ in $H^{2c}(M)$.
The following, equivariant version of this notion is more delicate
to define, see e.g. \cite{cr}, \cite[8.5]{miller-sturmfels}, \cite{fultonnotes}.

Let the group $G$ act on the complex vector space $V$, and let $Y\subset V$
be an invariant variety of complex codimension~$c$. Then $Y$
represents a cohomology class $[Y]\in H^{2c}_G(V)$ in equivariant
cohomology. Since $H^*_G(V)$ is naturally isomorphic to the ring
$H^*(BG)$ of $G$-characteristic classes, the equivariant class $[Y]$
is simply a $G$-characteristic class of degree $2c$.

\subsection{Equivariant classes of matrix matroid varieties}
\label{equivariant_class_section}

Let $D(k)$ be the group of diagonal matrices of size $k$. Consider
the action of $G_{n,k}=GL(n) \times D(k)$ on the vector space
$\C^{n\times k}$ of $n\times k$ matrices by
$$(A,B) \cdot M = A M B^{-1}, \qquad\qquad A\in GL(n), B \in D(k), M\in \C^{n\times k}.$$
Viewing elements of $\C^{n\times k}$ as vector configurations as in
(\ref{identify}), the action of $(A,B)\in G_{n,k}$ reparametrizes
$\C^n$ (the action of $A$) and rescales the vectors one by one (the
action of $B$). Therefore, the spaces $X_{\CC}$ and hence the
varieties $Y_{\CC}$ are $G_{n,k}$-invariant.

In the rest of the paper the main concept of interest will be the
equivariant class \begin{equation}\label{ring}[Y_{\CC}]\in
H^*_{G_{n,k}}(\C^{n\times
k})=H^*(BG_{n,k})=\Z[c_1,\ldots,c_n,d_1,\ldots,d_k],\end{equation}
where $c_i$ are the Chern classes of $GL(n)$, and $d_i$ are the
first Chern classes of the $GL(1)$ components of $D^k=GL(1)^k$. We
have $\deg c_i=2i, \deg d_i =2$.

\subsection{Examples}

In Sections \ref{section:GW}--\ref{hierarchy} we will show how to calculate
the classes $[Y_{\CC}]$, and discuss their geometric meaning.  Before that,
however, we show some examples.

\begin{example}\label{321tp} \rm
 Consider the configuration $\CC$ of Example \ref{ex:
concrete321}. We have
$$[Y_{\CC}]=\left(d_1d_2+d_1d_3+d_2d_3-c_1(d_1+d_2+d_3)+c_1^2-c_2\right)
(c_1-d_4-d_5)(d_6^2-c_1d_6+c_2).$$
\end{example}

\bigskip

Let us now consider matrix Schubert varieties $\CC_\ell$ of Example
\ref{ex:matrix schubert varieties}. That is, we have
$\ell=(l_0,\ldots,l_n)$, $\sum l_i=k$. We may assume without loss of
generality that there is an $r$ such that $l_1, \ldots, l_r$ are all
non-zero, while $l_{r+1}=\ldots=l_n=0$. (Indeed, observe for example
that $Y_{\CC_{2,0,2}}=Y_{\CC_{2,1,1}}$, by changing the complete
flag.) Define
$$\mu_i= \begin{cases} \sum_{j=0}^{i-1} l_j +1, &  i\leq r \\
k+i-r & i> r,\end{cases}\qquad\qquad \lambda_{n+1-i}=\mu_i-i,
\qquad\qquad \text{for}\ i=1,\ldots,n.$$ Let $\beta^{(i)}_j$ be
degree $j$ polynomials in the ring (\ref{ring}), defined by
$$1+\beta_1^{(i)}t+\beta_2^{(i)}t^2+\ldots = \frac{ \prod_{j<\mu_i}
(1+d_j t)}{1+c_1t+\ldots+c_nt^n}.$$

\begin{theorem}\label{schur_tp}
Using the notation above, the matrix Schubert variety $\CC_\ell
\subset \C^{n\times k}$ has complex codimension
$|\lambda|=\sum\lambda_i$, and we have
\begin{equation}\label{schur}
[Y_{\CC_\ell}]= (-1)^{|\lambda|}\det \left(
\beta^{(n+1-i)}_{\lambda_i+j-i}
\right)_{i,j=1,\ldots,n}.\end{equation}
\end{theorem}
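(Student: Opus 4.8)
The plan is to recognize $Y_{\CC_\ell}$ as a degeneracy locus and to compute its equivariant class by the Giambelli--Thom--Porteous--Kempf--Laksov formula, working universally in $H^*(BG_{n,k})$, where the relevant ``bundles'' are just the representations of $G_{n,k}$ whose Chern classes are the generators $c_i,d_j$ of (\ref{ring}). This makes the statement a reformulation of the fact, known from \cite{ss} and \cite{knutson-miller-ss}, that the multidegree of a matrix Schubert variety of Grassmannian type is a double Schur polynomial.

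First I would make the variety explicit. After the normalization $l_1,\dots,l_r\neq 0$, $l_{r+1}=\dots=l_n=0$ adopted before the theorem, the configuration $\CC_\ell$ places its columns successively in the flag members $L^0\subset L^1\subset\cdots$, so every $M\in Y_{\CC_\ell}$ satisfies the closed rank conditions $\rk\bigl(M_{\{1,\dots,\mu_s-1\}}\bigr)\le s-1$. Writing $W_s=\spa(e_1,\dots,e_{\mu_s-1})\subset\C^k$ for the span of the first $\mu_s-1$ columns, these are the conditions $\rk(M|_{W_s})\le s-1$ relative to the coordinate flag $W_1\subset W_2\subset\cdots\subset W_{r+1}=\C^k$. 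Invoking the cited description of $I_{\CC_\ell}$ (for matrix Schubert varieties the naive minors generate the ideal), these rank conditions cut out $Y_{\CC_\ell}$ exactly, so $Y_{\CC_\ell}$ is the degeneracy locus of the tautological map $M\colon\C^k\to\C^n$ with respect to $W_\bullet$. Only the conditions with $s\le r+1$ are binding; the remaining indices contribute the rectangular part $\lambda_1=\dots=\lambda_{n-r}=k-r$ of the partition.

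Next I would verify the codimension. Since $\lambda_{n+1-i}=\mu_i-i$, one has $\sum_i(\mu_i-i)=|\lambda|$, and a dimension count on the open stratum $X_{\CC_\ell}$ (or the general codimension formula for flagged degeneracy loci) shows $\codim Y_{\CC_\ell}=|\lambda|$. Because these are honest Cohen--Macaulay Schubert varieties, the locus attains the expected codimension and no excess-intersection correction intervenes, so the clean determinantal formula is applicable.

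Finally, the equivariant class. The Kempf--Laksov formula expresses the class of such a flagged degeneracy locus as a determinant whose $(i,j)$ entry is a Chern class of the formal difference $W_\bullet-V$; in the universal setting $c_t(W_s)=\prod_{j<\mu_s}(1+d_jt)$ and $c_t(V)=1+c_1t+\dots+c_nt^n$, so these Chern classes have generating function $\prod_{j<\mu_s}(1+d_jt)/c_t(V)$, which is precisely the definition of $\beta^{(s)}_\bullet$. Matching the shape of the Kempf--Laksov determinant to $\lambda$ produces the entries $\beta^{(n+1-i)}_{\lambda_i+j-i}$, and the global sign $(-1)^{|\lambda|}$ appears because $1/c_t(V)=\sum_m(-1)^m h_m(x_1,\dots,x_n)t^m$ contributes a sign in each of the $|\lambda|$ boxes, converting the standard Schur determinant $s_\lambda(V-W_\bullet)$ into (\ref{schur}); a sanity check in the extreme case $r=0$, where the locus is $\{M=0\}$ and the formula must reproduce the equivariant Euler class $\prod_{i,j}(x_i-d_j)$, fixes this sign. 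I expect the main obstacle to be exactly this last bookkeeping: aligning the index reversal $i\mapsto n+1-i$ and the shift $\lambda_i+j-i$ of the Kempf--Laksov determinant with the flag $\{W_s\}$, and tracking the sign through the passage between $c_t(W_s-V)$ and $c_t(V-W_s)$, so that the determinant is correctly identified with the double Schur polynomial representing $[Y_{\CC_\ell}]$.
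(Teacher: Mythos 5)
Your proposal is correct, but it takes a genuinely different route from the paper. The paper's own proof is short and essentially citational: it observes that $Y_{\CC_\ell}$ has more symmetry than a general matroid variety --- it is invariant under $GL(n)\times B(k)$, where $B(k)\supset D(k)$ is the Borel group of the flag, and is in fact an orbit closure of this enlarged action --- then quotes \cite[Thm.~5.1]{ss} and \cite{knutson-miller-ss} for the statement that the equivariant classes of these orbit closures are the double Schur polynomials of (\ref{schur}), and finally transfers the answer to $G_{n,k}$-equivariant cohomology via the homotopy equivalence $D(k)\subset B(k)$. You never invoke the Borel symmetry; instead you realize $Y_{\CC_\ell}$ as a flagged degeneracy locus of the universal map over (an approximation of) $BG_{n,k}$ and apply Kempf--Laksov. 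Your supporting steps hold up: the flagged rank conditions $\rk\bigl(M_{\{1,\ldots,\mu_s-1\}}\bigr)\le s-1$ are exactly the naive minors which, by the fact recorded in Section~\ref{sec:ideal}, generate the prime ideal $I_{\CC_\ell}$, so the scheme they cut out is $Y_{\CC_\ell}$ itself, reduced and of the expected codimension, and the determinantal formula computes its class on the nose; moreover $\beta^{(s)}_m$ is literally $c_m(W_s-V)$, since $c_t(W_s-V)=c_t(W_s)/c_t(V)$, and the sign $(-1)^{|\lambda|}$ fixed by your $r=0$ Euler-class check $\prod_{i,j}(\gamma_i-d_j)$ is the right one (it can also be verified against the paper's computations of $[Y_{\CC_{(0,3,0)}}]$ and $[Y_{\CC_{(0,2,0)}}]$). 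The trade-off: the paper's argument is two lines and isolates what is special about matrix Schubert varieties among matroid varieties (the extra Borel invariance, which fails for general $\CC$), while yours is self-contained modulo a textbook degeneracy-locus formula, makes the origin of the $\beta^{(s)}$'s transparent, and does the sign bookkeeping explicitly --- at the price of needing primality of the flagged-minor ideal, an input of comparable weight to the paper's citations.
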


\begin{proof}
This is, in fact, not a new theorem. Observe, that $Y_{\CC_\ell}$ is
not only invariant under the action of $GL(n) \times D(k)$, but
under the same action of $GL(n) \times B(k)$, where $B(k)$ is the
Borel group of upper triangular $k \times k$ matrices associated with the complete flag. The varieties
$Y_{\CC_\ell}$ are, in fact, the orbit closures of this extended
action. The equivariant classes of these orbit closures are
calculated in \cite[Thm. 5.1]{ss} to be the double Schur polynomials
of (\ref{schur}), see also \cite{knutson-miller-ss}. Since the
inclusion $D(k)\subset B(k)$ is a homotopy equivalence, the
$G_{n,k}$-equivariant classes are the same as the $GL(n)\times
B(k)$-equivariant classes.
\end{proof}

Equivariant classes of products of varieties multiply in the obvious
sense. For instance, the result of Example \ref{321tp} can be
recovered from the factorization (\ref{decomposition321}) and the
application of Theorem~\ref{schur_tp} to the three factors. Namely, we have
$$[Y_{\CC_{0,3,0}}]=\det\begin{pmatrix}
\frac{\prod_{i=1}^3 (1+d_it)}{1+c_1t+c_2t^2}|_{2} &
\frac{\prod_{i=1}^3
(1+d_it)}{1+c_1t+c_2t^2}|_{3} \\
0 & \frac{\prod_{i=1}^0 (1+d_it)}{1+c_1t+c_2t^2}|_{0}
\end{pmatrix}=d_1d_2+d_1d_3+d_2d_3-c_1(d_1+d_2+d_3)+c_1^2-c_2.$$
Here, and later, $f(t)|_i$ means the $i$'th coefficient of the
Taylor series $f(t)$ in the formal variable~$t$. Similarly,
$$[Y_{\CC_{0,2,0}}]=
-(d_4+d_5-c_1),\ \qquad [Y_{\CC_{(1,0,0)}}]=d_6^2-c_1d_6+c_2,
$$ after appropriate shifting
of indices.

\bigskip

It is rather difficult to present equivariant classes of matrix matroid
varieties which are not products of matrix Schubert varieties. For
example the class $[Y_{\CC_M}]$ for the Menelaus configuration of
Section \ref{menelaus_section} is a degree 8 polynomial with 173
terms (in $c$-$d$-monomials). To indicate how it looks we show this
polynomial after we substitute 0 for all the $d_i$ variables: $[Y_{\CC}]^*=[Y_{\CC}]_{d_i=0\forall i}$.
Observe that the $[Y_{\CC}]^*$ class is the $GL(n)$ equivariant class represented by $Y_{\CC}$. We have

\begin{itemize}
\item{} $[Y_{\CC_M}]^*= 3c_1^2c_2-2c_1c_3-c_2^2=3\Delta_{(211)}+2\Delta_{(22)}+3\Delta_{(31)}.$
\end{itemize}

Here the Schur polynomials $\Delta_{\lambda}$ corresponding to a partition $(\lambda_1\geq \lambda_2\geq\ldots\geq \lambda_r)$ are defined
by $\Delta_{\lambda}=\det(c_{\lambda_i+j-i})_{i,j=1,\ldots, r}$. The significance of the Schur basis is presented in Sections \ref{section:proof}, \ref{dstab}. The expression in Theorem \ref{schur_tp} can also be interpreted as a Schur polynomial.
Here is a list of similar specializations of $[Y_{\CC}]$ for the Ceva, Pappus, and Desargues configurations.

\begin{figure}
\epsffile{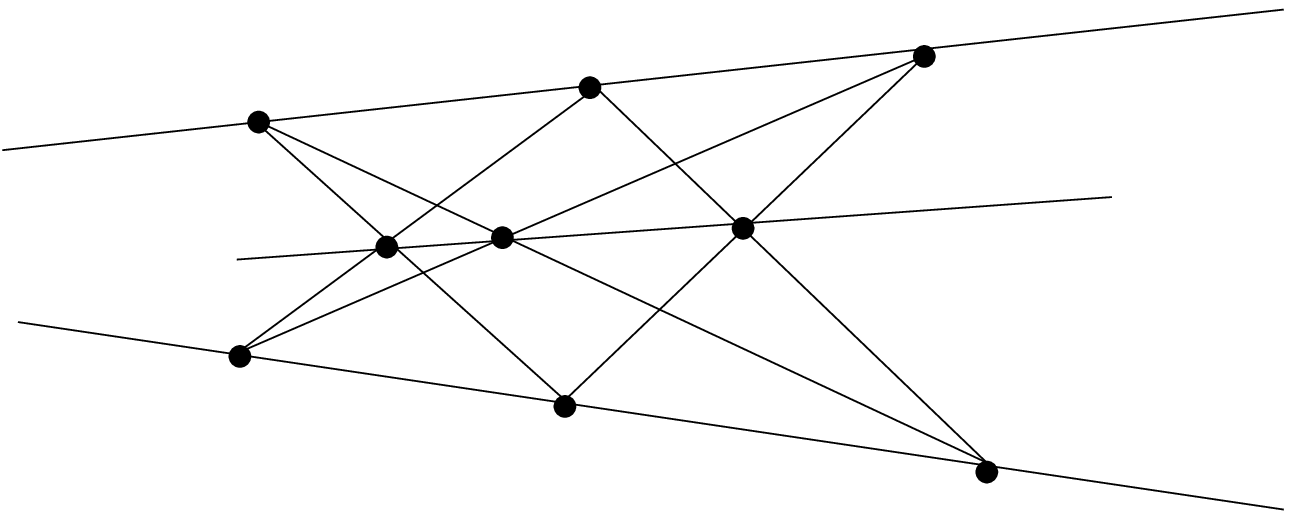} \put(-300,130){$P_1$} \put(-200,140){$P_2$}
\put(-100,150){$P_3$} \put(-285,80){$P_7$} \put(-230,90){$P_8$}
\put(-140,70){$P_9$} \put(-300,30){$P_4$} \put(-200,13){$P_5$}
\put(-100,-5){$P_6$}\caption{The Pappus configuration $\CC_P$}
\label{pappus_pic}
\end{figure}

\begin{itemize}
\item{} $[Y_{\CC_C}]^*=6\Delta_{(2211)}+4\Delta_{(222)}+3\Delta_{(3111)}+8\Delta_{(321)}+\Delta_{(33)}$;
\item{} $[Y_{\CC_P}]^*=11\Delta_{(221111)}+16\Delta_{(22211)}+8\Delta_{(2222)}+12\Delta_{(311111)}+28\Delta_{(32111)}+28\Delta_{(3221)}+17\Delta_{(3311)}+15\Delta_{(332)}$;
\item{} $[Y_{\CC_D}]^*=15\Delta_{(222111)}+20\Delta_{(22221)}+20\Delta_{(321111)}+50\Delta_{(32211)}+30\Delta_{(3222)}+30\Delta_{(33111)}+45\Delta_{(3321)}+10\Delta_{(333)}$.
\end{itemize}

The equivariant classes are rather meaningless formulas unless we
find geometric applications. We finish this section with a rather
simple application; more delicate geometric meaning will be
discussed in Sections \ref{section:GW} and \ref{section:proof}.

\subsection{The degree of $\PP(Y_{\CC})$}
Suppose the group $G$ acts on the vector space $V$, and $Y$ is an
invariant cone. Then the degree of the projective variety $\PP(Y)$
can be recovered from the equivariant class $[Y]\in H^*_G(V)$ by the
following procedure.

Let $T^n$ be a maximal torus of $G$ with corresponding Chern roots
$\alpha_i$. If $w_1,\ldots,w_n, w$ are integers with the property
that for any $z\in \C$, $|z|=1$ we have
$$(z^{w_1},\ldots,z^{w_n}) \cdot v = z^w v, \qquad (z^{w_1},\ldots,z^{w_n})\in T^n, v\in V,$$
then
$$\deg \PP(Y) = [Y] (\alpha_i= \frac{w_i}{w}).$$
On the right hand side we have the equivariant class, with the
number $w_i/w$ substituted into the Chern root corresponding to the
$i$'th factor of $T^n$. This theorem seems to be a folklore
statement, a recent proof is given eg. in \cite[6.4]{fnr_form}.

\smallskip

For matrix matroid varieties we have two natural choices for the
substitution. Either we substitute
$$c_i=\binom{n}{i}, d_i=0,\qquad\text{or}\qquad c_i=0, d_i=-1.$$
Observe that the first substitutions can be carried out for the
specialized classes above ($d_i=0 \forall i$), hence the following
theorem can be checked from the classes presented above.

\begin{theorem} For the Menelaus, Ceva, Pappus, and Desargues
configurations (see Figures \ref{menelaus ceva_figure},
\ref{pappus_pic}, \ref{desargues_pic}) we have
$$\deg \PP(Y_{\CC_M}) = 66,\qquad \deg \PP(Y_{\CC_C})=297,\qquad
\deg \PP(Y_{\CC_P}) =2943 ,\qquad \deg \PP(Y_{\CC_D})=4680.$$
\end{theorem}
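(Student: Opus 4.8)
The plan is to invoke directly the folklore degree formula recalled just before the statement. Since each $Y_{\CC}$ is a $G_{n,k}$-invariant cone in $\C^{n\times k}$, its projectivization satisfies $\deg \PP(Y_{\CC}) = [Y_{\CC}](\alpha_i = w_i/w)$, where the $\alpha_i$ run over the Chern roots of $G_{n,k}$ and $(w_i;w)$ are the integer weights of a one-parameter subgroup acting on $\C^{n\times k}$ through a single character. Thus the whole proof reduces to identifying the correct weights and then evaluating the already-computed classes, and no new geometry is required.

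First I would pin down the substitution. Taking the central one-parameter subgroup $z\mapsto z\cdot I_n$ of the $GL(n)$-factor, the action $(A,B)\cdot M = AMB^{-1}$ scales every matrix entry by $z$, so $w=1$, the $GL(n)$-Chern-root weights are all $1$, and the $D(k)$-weights are all $0$. Hence $\alpha_i=1$ on the $GL(n)$ side, giving $c_i = e_i(1,\ldots,1)=\binom{n}{i}$, while $d_i=0$; this is exactly the first displayed substitution. (The central subgroup $B=z\cdot I_k$ of $D(k)$ instead gives $w=-1$ and the dual substitution $c_i=0,\ d_i=-1$, which must yield the same number and provides an independent check.)

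Second, because this substitution kills all $d_i$, only the pure $GL(n)$-specialization $[Y_{\CC}]^* = [Y_{\CC}]_{d_i=0}$ is needed, and these are precisely the classes listed above for $\CC_M,\CC_C,\CC_P,\CC_D$. All four configurations have $n=3$, so I would substitute $c_1=c_2=3$, $c_3=1$ (and $c_i=0$ for $i>3$). The cleanest bookkeeping uses the Schur form: with all three Chern roots set to $1$ one has $\Delta_\lambda = s_{\lambda'}(1,1,1)$, the Weyl dimension of the irreducible $GL(3)$-representation of highest weight $\mu=\lambda'$, which the Weyl dimension formula evaluates as $\tfrac12(\mu_1-\mu_2+1)(\mu_2-\mu_3+1)(\mu_1-\mu_3+2)$. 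Weighting each such dimension by the coefficient in the listed classes and summing then produces $66,297,2943,4680$ directly; as a sanity check, for $\CC_M$ the monomial form $3c_1^2c_2-2c_1c_3-c_2^2$ evaluates to $3\cdot 27-6-9=66$ as well.

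I expect no genuine obstacle in this last step: given the classes, the theorem is a one-line substitution followed by arithmetic. The real work lives upstream, in producing the equivariant classes $[Y_{\CC}]^*$ by the interpolation method of the later sections, where all the difficulty of the underlying projective geometry is absorbed. The only two points demanding care here are checking the hypotheses of the folklore formula (that $Y_{\CC}$ is an invariant cone and that the chosen one-parameter subgroup acts through a single character, both immediate from the $G_{n,k}$-action) and remembering that the determinantal $\Delta_\lambda=\det(c_{\lambda_i+j-i})$ vanishes exactly when $\lambda_1>n$ rather than when $\lambda$ has more than $n$ parts—so that the six-row partitions occurring for $\CC_P$ and $\CC_D$ do contribute.
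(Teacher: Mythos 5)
Your proposal is correct and is exactly the paper's (implicit) proof: the paper simply notes that the substitution $c_i=\binom{n}{i}$, $d_i=0$ from the folklore degree formula can be applied to the listed classes $[Y_{\CC}]^*$, which is precisely what you carry out. Your added details---deriving the weights from the central one-parameter subgroups, evaluating $\Delta_\lambda=s_{\lambda'}(1,1,1)$ via the Weyl dimension formula, and noting that $\Delta_\lambda$ vanishes only when $\lambda_1>n$---are all accurate and reproduce $66$, $297$, $2943$, $4680$.
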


\begin{remark} \rm
The degree of $\PP(Y_{\CC_M})$ can also be recovered from the
decomposition in (\ref{menelaus_decomposition}). Indeed, in this
decomposition all three varieties are 4 codimensional; $Y_{naive}$
has degree $3^4$ because of B\'ezout's theorem; the determinantal
variety $\{M\in \C^{3\times 6}|\rk M\leq 2\}$ has degree $\binom{6}{4}$ (see \cite[14.4.14]{fulton_Int_Th}).
Hence $\deg Y_{\CC_M}=81-15=66$. The same argument shows that
$$[Y_{\CC_M}]=(c_1-d_1-d_2-d_6)(c_1-d_1-d_3-d_5)(c_1-d_2-d_3-d_4)(c_1-d_4-d_5-d_6)-\frac{\prod_{j=1}^6(1+d_jt)}{1+c_1t+c_2t^2+c_3t^3}|_4.$$
For the other varieties in the Theorem we know no other way of determining
the degree, but to calculate the equivariant class as in Section \ref{calculation}, then carry out the described substitution.
\end{remark}

\begin{figure}\epsfysize=2.5in \epsfxsize=3.7in
\epsffile{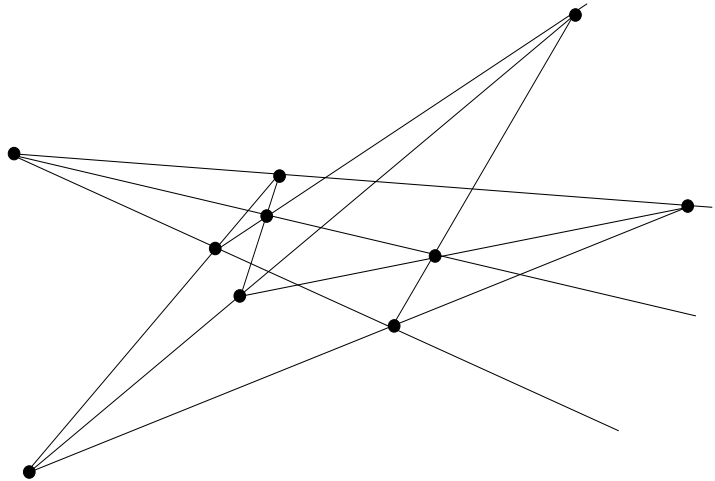} \caption{The Desargues configuration
$\CC_D$} \label{desargues_pic}
\end{figure}

\section{Matroid versions of linear Gromov-Witten
invariants}\label{section:GW}

In the rest of the paper, for simplicity, we will assume that $\CC$ is a configuration of $k$ {\em non-zero} vectors in
$\C^n$, and that $Y_{\CC}$ is pure dimensional.

For nonnegative integers $q_1,\ldots,q_k$ with $\sum q_i = \codim_{\C}
(Y_{\CC}\subset \C^{n\times k})$ we define
$$N(\CC;q_1,\ldots,q_k)=\#\{([v_1],\ldots,[v_k])\in (\PP \C^n)^k : (v_1,\ldots,v_k)\in Y_{\CC}, v_i \in
V_i\},$$ where  $V_1, \ldots, V_k$ is a generic collection of linear
spaces with $\dim V_i = q_i +1$.

More generally, instead of generic linear spaces $V_i$, we could
have considered varieties of different dimensions and degree. These generalized enumerative problems can be reduced
to the linear version above.

\begin{example} \label{steiner_construction}\rm
 Consider the configuration $\CC$ in Figure
\ref{steiner_pic} (a).
\begin{figure}\epsfysize=1.7in \epsfxsize=6in
\epsffile{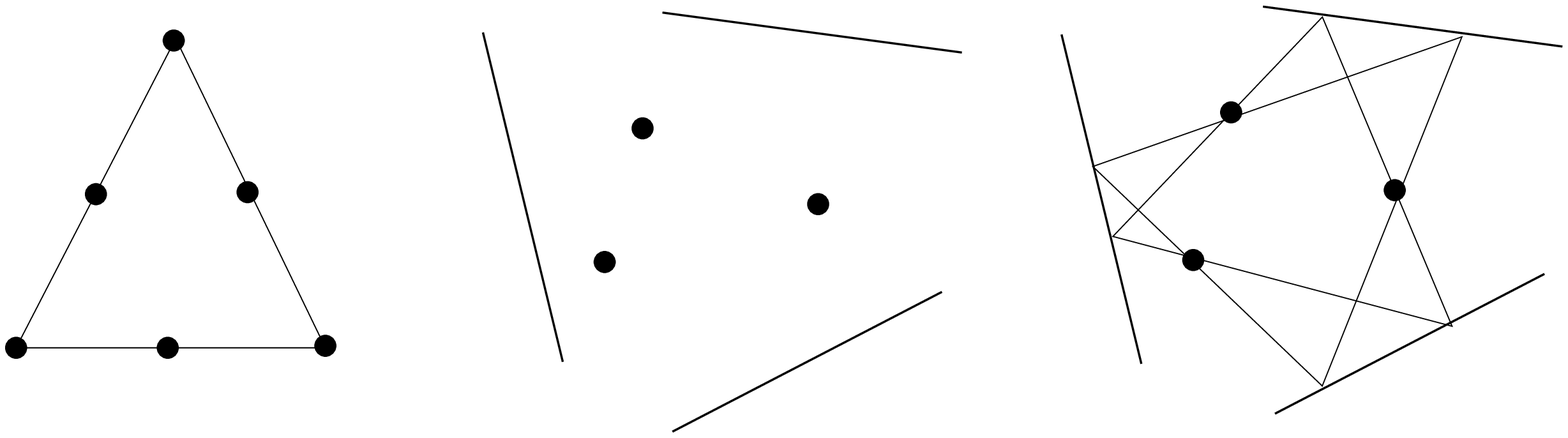}\put(-450,10){$P_1$}\put(-390,10){$P_6$}\put(-340,10){$P_2$}\put(-390,130){$P_3$}
\put(-430,70){$P_5$}
\put(-360,70){$P_4$}\put(-320,50){$\PP(V_1)$}\put(-205,5){$\PP(V_2)$}\put(-200,125){$\PP(V_3)$}
\put(-270,33){$\PP(V_6)$}
\put(-203,70){$\PP(V_4)$}\put(-265,95){$\PP(V_5)$}\caption{(a)
$\CC$, (b) the enumerative problem, (c) the solution $=2$}
\label{steiner_pic}
\end{figure}
The number $N(\CC;1,1,1,0,0,0)$ is the number of solutions to the
following problem: given 3 points and 3 straight lines (generically)
on the plane $\PP^2$ (Figure \ref{steiner_pic} (b)). How many
triangles exist, whose vertices are on the straight lines, and whose
sides pass through the given points? The solution is 2 (Figure
\ref{steiner_pic} (c)) due to the following well know argument:
Choosing a point $X$ on $\PP V_1$ we can project it through $\PP
V_6$ to $\PP V_2$, then project further through $\PP V_4$ to $\PP
V_3$, then further through $\PP V_5$ back to $\PP V_1$, obtaining a
point $X'$. The map $X \mapsto X'$ is a projective transformation of
the projective line $\PP V_1$, whose number of fixed points is the
question. Since projective transformations have the form $x \mapsto
(ax+b)/(cx+d)$ (in affine coordinate), the number of fixed points
is 2. (The construction of the fixed points using a compass and
straightedge is the famous Steiner construction, see e.g.
\cite[Ad.3]{geo_book}.)
\end{example}

\begin{example} \rm \label{mene_enum_example}
Consider the number $N(\CC_{M};1,1,1,1,0,0)$ for the Menelaus
configuration of Figure \ref{menelaus ceva_figure}. One may try to follow
the argument of Example \ref{steiner_construction}: choose a point
$X$ on $\PP V_1$, projecting it through $\PP V_6$ to $\PP V_2$, then
further project through the intersection of $\PP V_4$ and the line
$\overline{\PP V_5 ,\PP V_6}$ to $\PP V_3$, then even further
through the point $\PP V_5$, back to $\PP V_1$, obtaining $X'$. The
transformation $X \mapsto X'$ is a projective transformation of $\PP
V_1$, hence it has 2 fixed points, suggesting that
$N(\CC_{M};1,1,1,1,0,0)=2$. However, this is wrong, the correct
number is $N(\CC_{M};1,1,1,1,0,0)=1$. One of the two fixed points of
the transformation $X\mapsto X'$ is on the ``other'' component of
$Y_{naive}$ in (\ref{menelaus_decomposition}), not on $Y_{\CC_M}$.
Geometrically, one of the fixed points of the transformation
corresponds to all the points lying on the line of $\PP V_5$ and
$\PP V_6$, which configuration does not belong to $Y_{\CC_M}$. There
are configurations (e.g. the Ceva configuration) for which some
``extra'' components in $Y_{naive}\setminus Y_{\CC}$ have bigger
dimensions then $Y_{\CC}$. For these, arguments similar to that in
Example \ref{steiner_construction} suggest the incorrect
$N(\CC;q_1,\ldots,q_k)=\infty$.
\end{example}

If the ideal $I_{\CC}$ is known, determining the numbers
$N(\CC;q_1,\ldots,q_k)$ reduces to algebraic calculations, which
are, at least theoretically, doable. However, as we mentioned, the
ideal $I_{\CC}$ is not known in general. The equivariant class
defined in Section \ref{equivariant_class_section} provides an
answer.

\begin{theorem} \label{coeff_GW}
The coefficient of $d_1^{q_1}d_2^{q_2}\ldots d_k^{q_k}$ in
$[Y_{\CC}]$ is $(-1)^{\codim \CC} N(\CC;q_1,q_2,\ldots,q_k)$.
\end{theorem}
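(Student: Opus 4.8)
The plan is to isolate the coefficient of $d_1^{q_1}\cdots d_k^{q_k}$ by restricting the equivariant class to the subtorus $D(k)\subset G_{n,k}$ and reinterpreting the resulting polynomial as an honest cohomology class on a product of projective spaces. Write $c=\codim\CC$ and abbreviate $d^q=d_1^{q_1}\cdots d_k^{q_k}$, and similarly $h^q$. Since $[Y_{\CC}]$ is homogeneous of degree $c$ (with $\deg c_i=i$, $\deg d_j=1$) and $\sum q_i=c$, the coefficient of $d^q$ is unchanged by the substitution $c_1=\cdots=c_n=0$. Now $[Y_{\CC}]|_{c_i=0}$ is exactly the image of $[Y_{\CC}]$ under the restriction map $H^*_{G_{n,k}}(\C^{n\times k})\to H^*_{D(k)}(\C^{n\times k})=\Z[d_1,\ldots,d_k]$ induced by the inclusion $D(k)=1\times D(k)\hookrightarrow G_{n,k}$: this map sends each $c_i\mapsto 0$ (the universal $GL(n)$-bundle pulls back trivially over $BD(k)$) and $d_j\mapsto d_j$. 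By naturality of equivariant classes under restriction of the group, this image is the $D(k)$-equivariant class $[Y_{\CC}]_{D(k)}$, so it suffices to compute $[Y_{\CC}]_{D(k)}$ and read off its $d^q$-coefficients.

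To compute $[Y_{\CC}]_{D(k)}$ geometrically, I would restrict to the $D(k)$-invariant open set $U=(\C^n\setminus 0)^k\subset\C^{n\times k}$, on which $D(k)=(\C^*)^k$ acts freely with quotient $(\PP^{n-1})^k$. Hence $H^*_{D(k)}(U)=H^*((\PP^{n-1})^k)=\Z[h_1,\ldots,h_k]/(h_1^n,\ldots,h_k^n)$, and tracking the tautological line bundle of the $j$-th factor together with the sign coming from the action $v_j\mapsto b_j^{-1}v_j$ shows that the restriction $H^*_{D(k)}(\C^{n\times k})\to H^*_{D(k)}(U)$ sends $d_j\mapsto -h_j$. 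Under the same restriction the equivariant fundamental class $[Y_{\CC}]_{D(k)}$ goes to the ordinary class of $(Y_{\CC}\cap U)/D(k)=:\PP(Y_{\CC})$, a codimension-$c$ subvariety of $(\PP^{n-1})^k$ (the nonzero-vector hypothesis on $\CC$ guarantees that $Y_{\CC}\cap U$ is dense in $Y_{\CC}$, so no class is lost). Consequently the coefficient of $d^q$ in $[Y_{\CC}]_{D(k)}$ equals $(-1)^{|q|}=(-1)^{c}$ times the coefficient of $h^q$ in $[\PP(Y_{\CC})]$.

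It remains to identify this last coefficient with $N(\CC;q_1,\ldots,q_k)$. The condition $v_j\in V_j$ with $\dim V_j=q_j+1$ cuts out, in the $j$-th factor $\PP^{n-1}$, a linear subspace $\PP(V_j)$ of codimension $n-1-q_j$, whose class is $h_j^{\,n-1-q_j}$. The enumerative number is then the intersection number
\begin{equation*}
N(\CC;q)=\int_{(\PP^{n-1})^k}[\PP(Y_{\CC})]\cdot\prod_{j=1}^k h_j^{\,n-1-q_j},
\end{equation*}
and since the top class of $(\PP^{n-1})^k$ is $\prod_j h_j^{\,n-1}$, expanding $[\PP(Y_{\CC})]=\sum_p e_p\prod_j h_j^{p_j}$ shows this integral equals $e_q$, the coefficient of $\prod_j h_j^{q_j}$. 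Combining with the previous paragraph, the coefficient of $d^q$ in $[Y_{\CC}]$ equals $(-1)^c N(\CC;q)$, as claimed.

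The main obstacle is justifying the enumerative equality in the displayed formula, that is, that the geometric count $N(\CC;q)$ really equals the cohomological intersection number. For this I would invoke Kleiman's transversality theorem for the transitive action of $GL(n)^k$ on tuples $(\PP(V_1),\ldots,\PP(V_k))$ of subspaces of the fixed dimensions: for generic $V_j$ the product $\prod_j\PP(V_j)$ meets $\PP(Y_{\CC})$ transversally in finitely many reduced points lying in the smooth locus. Here the dimension count, together with the assumed purity of $Y_{\CC}$, forces the $0$-dimensional intersection to avoid the singular locus of $\PP(Y_{\CC})$, which has codimension at least $1$; this makes the set-theoretic count equal to the intersection number and simultaneously guarantees finiteness. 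A secondary point to check carefully is the density of $Y_{\CC}\cap U$ in $Y_{\CC}$ and that $\PP(Y_{\CC})$ has the correct pure codimension $c$, both of which follow from the standing hypotheses that $\CC$ consists of nonzero vectors and that $Y_{\CC}$ is pure dimensional.
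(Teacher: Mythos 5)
Your proof is correct, but it follows a genuinely different route from the paper's. The paper works with the full group $G_{n,k}$: it approximates $BG_{n,k}$ by $\Gr_n\C^N\times(\PP^{N-1})^k$ for large $N$, uses the classifying map of the dual bundle $\tau_n^*$ and a generic section of $\Hom(\tau_1^k,\tau_n^*)$ (via Kleiman--Bertini) to realize $[Y_{\CC}]|_{c_i\mapsto(-1)^ic_i}$ as the class of an honest subvariety $V_{\CC}$, and then pairs $V_{\CC}$ against the self-dual basis of products of Schubert varieties $S_{\lambda,\q}$; Theorem \ref{coeff_GW} is the special case $\lambda=\emptyset$ of the resulting Theorem \ref{general_coeffs}. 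You instead throw away the $GL(n)$-direction at the start: restrict to the subtorus $D(k)$, pass to the free locus $U=(\C^n\setminus 0)^k$ with quotient $(\PP^{n-1})^k$, identify the pure $d$-part of $[Y_{\CC}]$ with the ordinary class of $\PP(Y_{\CC})\subset(\PP^{n-1})^k$, and then convert coefficients into counts by Kleiman transversality for the $GL(n)^k$-homogeneous space $(\PP^{n-1})^k$. Your approach buys economy: no stable ($N\to\infty$) approximation, no non-holomorphic classifying-map twist, no Schubert calculus beyond products of projective spaces, and the genericity statement needed is exactly the one in the definition of $N(\CC;\q)$. What it gives up is the stronger byproduct of the paper's argument: since restriction to $D(k)$ kills all $c$-information, your method sees only the pure $d$-coefficients, whereas the paper's proof simultaneously shows that \emph{every} coefficient in the expansion $\sum a_{\mu,\w}\Delta_\mu(c)\prod_i(-d_i)^{w_i}$ is an intersection number, hence nonnegative (Theorem \ref{general_coeffs}). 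Two fine points in your write-up that deserve the care you partly gave them: the sign $d_j\mapsto -h_j$ is convention-dependent but is the one consistent with the paper (check it against $[Y_{\CC_{(0,2,0)}}]=c_1-d_4-d_5$, whose $D(k)$-restriction must be the diagonal class $h_1+h_2$ in $(\PP^1)^2$); and reading coefficients off in $H^*((\PP^{n-1})^k)=\Z[h_1,\ldots,h_k]/(h_1^n,\ldots,h_k^n)$ is legitimate precisely because each $q_j\le n-1$ (forced by $\dim V_j=q_j+1\le n$), so the monomial $d^q$ survives the truncation while monomials killed by it cannot contaminate the $h^q$-coefficient.
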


The proof will be given in the next section.

\begin{example} \rm
The matrix matroid variety corresponding to the configuration of
Example \ref{steiner_construction} is a product of matrix Schubert
varieties. Hence, its equivariant class is computed by
Theorem~\ref{schur_tp} to be
$$(c_1-d_1-d_3-d_5)(c_1-d_2-d_3-d_4)(c_1-d_1-d_2-d_6).$$
The coefficient of $d_1d_2d_3$ is $-2$, reproducing the result of
Example \ref{steiner_construction}.
\end{example}

\begin{example}\label{Sch_problem}
\rm Special cases of $N(\CC;\q)$'s are solutions to
certain so-called {\em Schubert problems}. We illustrate this with the prototype of Schubert problems:
how many straight lines intersect 4 generic lines in $\PP^3$? In our language the answer is
$N(\CC_{(0,1,3,0,0)};1,1,1,1)$. According to Theorems~\ref{coeff_GW} and \ref{schur_tp} we have
$N(\CC_{(0,1,3,0,0)};1,1,1,1)=$ coefficient of $d_1d_2d_3d_4$ in
$$\det\begin{pmatrix}
d_1d_2+\ldots+d_4d_5 & d_1d_2d_3+\ldots+d_3d_4d_5 \\
d_1+\ldots+d_4 & d_1d_2+\ldots+d_3d_4
\end{pmatrix},$$
which is clearly $\binom{4}{2}-\binom{4}{1}=2$.
\end{example}

\begin{remark} Certain $N(\CC;\q)$ invariants are 0 for obvious reasons. For example, if
$P_1$, $P_2$, and $P_3$ are on one line in the configuration $\CC$, then $N(\CC;0,0,0,q_4,\ldots,q_k)=0$.
Indeed, $P_1$, $P_2$, and $P_3$, being on one line, can not be three {\em generic} points.
Similarly, if there is a subset $I=\{i_1,\ldots,i_s\}\subset [k]$ such that $\sum (q_{i_j}+1)>r_{\CC}(\{i_1,\ldots,i_s\})$, then
obviously $N(\CC;\q)=0$. It is easy to see that the existence of such an $I$ is the only reason for vanishing $N(\CC;\q)$.
\end{remark}

The method in Section \ref{calculation} to calculate $[Y_{\CC}]$ for the configurations $\CC_M$ (Menelaus), $\CC_C$ (Ceva), $\CC_P$ (Pappus), and $\CC_D$ (Desargues) (just like any other configuration we tried) works, leading to the knowledge of all the Gromov-Witten invariants of these configurations. Below we present some information on these invariants.

\begin{description}
\item[Menelaus] {\em All} non-zero coefficients of
the pure $d$ monomials of $[Y_{\CC_M}]$ are 1---for example the one studied in Example \ref{mene_enum_example}.
\item[Ceva] The same holds for the Ceva configuration: the range of invariants is $\{0,1\}$.
\item[Pappus] The range of the $N(\CC_P;\q)$ invariants is $\{0,1,2,3,4,5\}$. Here are some sample results.
\begin{itemize}
\item{} $N(\CC_P;1,1,1,1,1,1,1,0)=5$, that is, the number of Pappus configurations on the plane whose $i$'th vertex is on a pre-described generic line $l_i$ for $i=1,\ldots,8$, and whose 9'th point is a pre-described generic point, is 5. We know no other way of finding this number.
\item{} $N(\CC_P; 2,0,0,1,1,1,1,1,1)=4$. The argument in Example \ref{steiner_construction} would suggest the wrong answer 5.
\item{} $N(\CC_P; 1,1,0,2,1,1,0,0,2)=N(\CC_P;1,1,1,1,1,0,0,1,2)=3$.
\end{itemize}
\item[Desargues] Again, the range of the invariants $N(\CC_D;\q)$ is $\{0,1\}$.
\end{description}

It would be interesting to find a geometric interpretation of the property of a configuration, which is equivalent to the condition that the range of $N(\CC;\q)$ is $\{0,1\}$.

\section{Proof of Theorem \ref{coeff_GW}} \label{section:proof}

We are going to present a proof of Theorem \ref{coeff_GW} which also
proves positivity and enumerative properties of other coefficients.

Let $\CC$ be a configuration of $k$ vectors in $\C^n$, such that $Y_{\CC}$ is a codimension $l$ subvariety of $\Hom(\C^k,\C^n)$.

Let $\tau_n$ be the universal tautological bundle over the Grassmannian $\Gr_n\C^{\infty}$ (universal subbundle) which we will approximate with the finite Grassmannian $\Gr_n\C^N$ for a large $N>>n,k$. Below we will refer to certain number as `large'; by this we mean that those numbers tend to infinity as $N\to \infty$. The cohomology of the finite Grassmannian is a factor of the cohomology $H^*(\Gr_n\C^{\infty})=\Z[c_1,\ldots,c_n,d_1,\ldots,d_k]$ by an ideal with large degree generators. In our notations we will ignore this ideal, and identify the two cohomology rings.

Let $\phi:\Gr_n\C^N\to\Gr_n\C^N$ be the (necessarily non-holomorphic) classifying map of the dual vector bundle $\tau_n^*$, and consider the induced diagram
\[
\xymatrix{
\widehat{\Sigma}_{\CC} \ar@{^{(}->}[r] & \Hom(\tau_1^k,\tau_n^*) \ar[r]^{\psi} \ar[d]_\xi & \Hom(\tau_1^k,\tau_n) \ar[d] & \Sigma_{\CC} \ar@{_{(}->}[l] \\
 & \Gr_n\C^N \times (\PP^{N-1})^k \ar[r]^{\phi\times id} & \Gr_n\C^N \times (\PP^{N-1})^k
}
\]
Here $\psi$ is the map induced by $\phi\times id$, and $\Sigma_{\CC}$ is the collection of the copies of $Y_{\CC}$ in each fiber of the bundle $\Hom(\tau_1^k,\tau_n)$. That is, by definition,
the cohomology class represented by $\Sigma_{\CC}$ in the cohomology $H^*(\Hom(\tau_1^k,\tau_n))=H^*(\Gr_n\C^N \times (\PP^{N-1})^k)$ is the equivariant class $[Y_{\CC}]$. We set $\widehat{\Sigma}_{\CC}=\psi^{-1}(\Sigma_{\CC})$.

The homomorphism $(\phi\times id)^*:\Z[c_1,\ldots,c_n,d_1,\ldots,d_k]\to\Z[c_1,\ldots,c_n,d_1,\ldots,d_k]$  induced by the map $\phi\times id$ maps $c_i$ to $(-1)^ic_i$, and $d_i$ to $d_i$. Hence we have that
$$[\widehat{\Sigma}_{\CC} \subset \Hom(\tau_1^k,\tau_n^*)]=(\phi \times id)^*[{\Sigma}_{\CC} \subset \Hom(\tau_1^k,\tau_n)]=[Y_{\CC}]|_{c_i\mapsto (-1)^i c_i}.$$

Observe that the bundle $\xi$ has a large dimensional space of sections. Indeed, let $\alpha:\C^N \to (\C^N)^*$ be a linear map. Then for the
map
$$s_\alpha: (L^n\leq \C^N, l^1_1,\ldots,l^1_k\leq \C^N) \mapsto \left( v  \mapsto \left( \alpha(v)|_{l_1},\ldots,\alpha(v)|_{l_k}\right)\right)$$
we have that $s_\alpha^*$ is a section of $\xi$.
Therefore, using the Kleiman-Bertini transversality theorem, we can assume that $s$ is transversal to $Y_{\CC}$ for an appropriate choice of a section $s$ of $\xi$. Hence we have that $V_{\CC}=s^{-1}(\widehat{\Sigma}_{\CC})$ is a codimension $l$ subvariety of $\Gr_n\C^N \times (\PP^{N-1})^k$, which represents $[Y_{\CC}]|_{c_i\mapsto (-1)^i c_i}$ in the cohomology of $\Gr_n\C^N \times (\PP^{N-1})^k$.

Now fix a complete flag in $\C^N$, and consider the products of Schubert varieties
$$S_{\lambda,\q}=S_\lambda \times (S_{q_1}\times\ldots \times S_{q_k}) \subset \Gr_n\C^N \times (\PP^{N-1})^k.$$
Here $\lambda$ is a partition (with number of parts $\leq n$), and $S_i\subset \PP^{N-1}$ is a linear space of codimension $i$, with $\q=(q_1,\ldots,q_k)$. The cohomology classes represented by $S_{\lambda,\q}$'s form a basis of the cohomology group, and by the Giambelli formula of Schubert calculus we know that
$$[S_{\lambda,\q}]=\Delta_{\lambda}(-c_1,c_2,-c_3,c_4,\ldots,(-1)^nc_n)\cdot\prod_{i=1}^k(-d_i)^{q_i}.$$
Recall also that this basis is self dual in the sense that $\int [S_{\lambda,\q}][S_{\mu,\w}]=0$ unless $\mu=\lambda'$, the complement of $\lambda$ in the $n\times (N-n)$ rectangle, and $\w=\q'$, ie, $\q+\w=(N,\ldots,N)$ (in which case, the integral is 1).

By appropriate choice of the section $s$ we may also assume that $V_{\CC}$ is transversal to the  Schubert varieties $S_{\lambda,\q}$.
Let $|\lambda| + \sum q_i =\dim(\Gr_n\C^N \times (\PP^{N-1})^k) -l$. Then we have that
$$\#(V_{\CC} \cap S_{\lambda,\q})= \int [V][S_{\lambda,\q}],$$
which is then the coefficient of $[S_{\lambda',\q'}]$ when $[V_{\CC}]$ is written as a linear combination of $[S_{\mu,\w}]$'s. By rephrasing we obtain the following theorem.


\begin{theorem} \label{general_coeffs}
 If $[Y_{\CC}]$ is expressed as a linear combination
$$\sum_{\mu,\w} a_{\mu,w}\cdot \Delta_{\mu}(c_1,c_2,\ldots,c_n)\prod_{i=1}^k(-d_i)^{w_i},$$
then the coefficient $a_{\lambda,\q}$ is equal to the intersection number $\#(V_{\CC} \cap S_{\lambda',\q'})$. Hence all coefficients in this linear combination are nonnegative.
\end{theorem}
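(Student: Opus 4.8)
The plan is to read off $a_{\lambda,\q}$ from the geometry already set up, by transporting the expansion of $[Y_\CC]$ through the sign substitution $c_i\mapsto(-1)^ic_i$ and then isolating the coefficient by intersecting $V_\CC$ against the Poincar\'e dual Schubert cycle. Everything needed is in hand: the closed subset $V_\CC\subset\Gr_n\C^N\times(\PP^{N-1})^k$ representing $[Y_\CC]|_{c_i\mapsto(-1)^ic_i}$, the Giambelli expression for $[S_{\lambda,\q}]$, the self-duality of the Schubert basis, and the transversality built into the choice of the section $s$. The theorem is then their synthesis, together with a positivity statement which is the one genuinely nontrivial point.

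First I would match the two bases. Writing $[Y_\CC]=\sum_{\mu,\w}a_{\mu,\w}\,\Delta_\mu(c_1,\dots,c_n)\prod_{i}(-d_i)^{w_i}$ is legitimate because the products $\Delta_\mu(c)\prod_i d_i^{w_i}$ form a $\Z$-basis of the ring (\ref{ring}). Applying $c_i\mapsto(-1)^ic_i$ turns $\Delta_\mu(c_1,\dots,c_n)$ into $\Delta_\mu(-c_1,c_2,-c_3,\dots,(-1)^nc_n)$, which is exactly the Grassmannian factor of $[S_{\mu,\w}]$ given by Giambelli, while the $d$-factor $\prod_i(-d_i)^{w_i}$ is untouched. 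Hence
\[
[V_\CC]=[Y_\CC]\big|_{c_i\mapsto(-1)^ic_i}=\sum_{\mu,\w}a_{\mu,\w}\,[S_{\mu,\w}],
\]
so the Schubert coefficients of $[V_\CC]$ are literally the $a_{\mu,\w}$. Next I would extract one coefficient by duality: pairing with $[S_{\lambda,\q}]$ and integrating, self-duality gives $\int[S_{\mu,\w}][S_{\lambda,\q}]=\delta_{\mu,\lambda'}\delta_{\w,\q'}$, whence $\int[V_\CC][S_{\lambda,\q}]=a_{\lambda',\q'}$. By the transversality arranged for $s$ this integral equals the point count $\#(V_\CC\cap S_{\lambda,\q})$, so $a_{\lambda',\q'}=\#(V_\CC\cap S_{\lambda,\q})$; relabelling $\lambda\mapsto\lambda'$, $\q\mapsto\q'$ and using $(\lambda')'=\lambda$, $\q+\q'=(N,\dots,N)$ yields the asserted identity $a_{\lambda,\q}=\#(V_\CC\cap S_{\lambda',\q'})$.

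The main obstacle is the positivity, hidden in the innocent-looking equality $\#(V_\CC\cap S_{\lambda,\q})=\int[V_\CC][S_{\lambda,\q}]$. Because $\phi$ is non-holomorphic, $V_\CC$ is not a complex subvariety, so a priori the signed intersection number could differ from the cardinality of the transverse intersection. The device that rescues positivity is that $V_\CC=(\psi\circ s)^{-1}(\Sigma_\CC)$ is cut out transversally from the complex subvariety $\Sigma_\CC$; consequently its normal bundle is the pullback of the complex normal bundle of $\Sigma_\CC$, hence is itself a complex bundle, which co-orients $V_\CC$. I would then verify that at each transverse point of $V_\CC\cap S_{\lambda,\q}$, with $S_{\lambda,\q}$ complex, the resulting local intersection index is $+1$, so that $\int[V_\CC][S_{\lambda,\q}]$ counts points without cancellation and is therefore a nonnegative integer. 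Checking that this complex co-orientation is uniformly compatible with the complex structure transverse to the Schubert cycles—rather than matching it only up to a sign—is the delicate step, and it is precisely what the whole dual-bundle construction with $\tau_n^*$ was designed to guarantee.
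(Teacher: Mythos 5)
Your reduction of the coefficient formula to the pairing $\int[V_{\CC}][S_{\lambda,\q}]=a_{\lambda',\q'}$ --- via the substitution $c_i\mapsto(-1)^ic_i$, Giambelli, and self-duality of the Schubert basis --- is exactly the paper's argument. The gap is in the final step, where this pairing must be identified with an honest, unsigned point count: your premise that $V_{\CC}$ is not a complex subvariety is a misreading of the construction, and the co-orientation argument you substitute for it does not close the gap. In the paper, $\widehat{\Sigma}_{\CC}$ is (fiberwise) the union of the copies of $Y_{\CC}$ inside the \emph{holomorphic} bundle $\Hom(\tau_1^k,\tau_n^*)$; since the structure group $GL(n)\times GL(1)^k$ leaves $Y_{\CC}$ invariant, this is a complex algebraic subvariety of the total space. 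The section $s=s_\alpha^*$ is algebraic, and the entire purpose of passing to the dual bundle $\tau_n^*$ is that this bundle --- unlike $\Hom(\tau_1^k,\tau_n)$, whose tautological factor has no nonzero holomorphic sections --- is spanned by algebraic sections, so Kleiman--Bertini applies and $V_{\CC}=s^{-1}(\widehat{\Sigma}_{\CC})$ is a \emph{complex algebraic} subvariety. (Set-theoretically this agrees with your $(\psi\circ s)^{-1}(\Sigma_{\CC})$, but that description hides the algebraicity.) The non-holomorphic maps $\phi,\psi$ enter only to compute the class $[\widehat{\Sigma}_{\CC}]=[Y_{\CC}]|_{c_i\mapsto(-1)^ic_i}$; they play no role in the geometry of $V_{\CC}$. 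Once $V_{\CC}$ is algebraic, each transverse intersection point with the complex Schubert cycle contributes $+1$, and both the equality $a_{\lambda,\q}=\#(V_{\CC}\cap S_{\lambda',\q'})$ and nonnegativity are immediate.

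Your proposed rescue cannot be completed as stated. A complex structure on the normal bundle of a real submanifold does not force local intersection indices with complex cycles to be $+1$: at a transverse point the relevant isomorphism $T_pS_{\lambda',\q'}\to N_pV_{\CC}$ is only real linear, and a real-linear identification of complex vector spaces can reverse the canonical orientations (complex conjugation on $\C$ already does). For instance $V=\{(z,\bar z)\}\subset\C^2$ meets $S=\C\times\{0\}$ transversally in one point, with index $-1$ for one of the two natural complex structures on $NV$. Worse, the co-orientation you propose is transported through $d(\psi\circ s)$, and $d\psi$ is not complex linear (only fiberwise so), so the ``uniform compatibility'' you flag as the delicate step is not merely delicate --- it is not available in your framework. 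Note finally that this gap infects not just nonnegativity but the displayed identity itself: without algebraicity of $V_{\CC}$ you obtain only ``coefficient $=$ signed count,'' not ``coefficient $=\#(V_{\CC}\cap S_{\lambda',\q'})$.''
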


Let us now consider the special case of $\lambda$ being the empty partition, and $\sum q_i=l$. We obtain that the coefficient of $(-1)^l\prod_i d_i^{q_i}$ in $[Y_{\CC}]$ is the intersection number
$$\#(V \cap (point \times H_1 \times \ldots \times H_k)),$$
where $H_i$ are generic linear subspaces in $\PP^N$ of total codimension $l$.  Identifying the {\em point} in $\Gr_n\C^N$ with $\C^n$ this intersection number is the same as the definition of $N(\CC;\q)$. This proves Theorem \ref{coeff_GW}.

\smallskip

Theorem \ref{coeff_GW} shows that geometric interpretation of the pure $d$ coefficients of $[Y_{\CC}]$, but the other extreme, the pure $c$ coefficients are also noteworthy. By the definition of $GL(n)$-equivariant cohomology we have

\begin{theorem}
Suppose $\pi:E \to B$ is a rank $n$ vector bundle with Chern classes $c_1,\ldots,c_n$. Assume that $\pi$ has $k$ sections satisfying a certain transversality property. Then the cohomology class in $B$ represented by the subvariety
$$\{b\in B| s_1(b), \ldots, s_k(b) \text{ form a configuration belonging to } Y_{\CC}\subset \pi^{-1}(b) \}$$
is equal to $[Y_{\CC}]^*$.
\end{theorem}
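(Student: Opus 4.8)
The plan is to realize the locus in question as the preimage of a universal subvariety and then to invoke the very definition of the $GL(n)$-equivariant class through the Borel construction, exactly as in the proof of Theorem~\ref{coeff_GW} but now without the auxiliary line-bundle directions (the $d_i$) and without the dualizing map. First I would package the data: the $k$ sections $s_1,\dots,s_k$ of $\pi\colon E\to B$ assemble into a single section $\sigma=(s_1,\dots,s_k)$ of the rank $nk$ bundle $E^{\oplus k}\to B$, whose fiber over $b$ is $E_b^{\oplus k}\cong\C^{n\times k}$ under the identification (\ref{identify}). Because $Y_{\CC}\subset\C^{n\times k}$ is $GL(n)$-invariant and $E^{\oplus k}$ carries the diagonal $GL(n)$-action as structure group, the copies of $Y_{\CC}$ in the individual fibers glue to a well-defined subvariety $\Sigma_{\CC}\subset E^{\oplus k}$, and the locus named in the theorem is precisely $\sigma^{-1}(\Sigma_{\CC})$.

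Next I would identify the class of $\Sigma_{\CC}$. By definition the class $[Y_{\CC}]^*\in H^*_{GL(n)}(\C^{n\times k})\cong H^*(BGL(n))=\Z[c_1,\dots,c_n]$ is the ordinary cohomology class of the associated subvariety $\mathcal Y=EGL(n)\times_{GL(n)}Y_{\CC}$ inside the universal bundle $\mathcal E=EGL(n)\times_{GL(n)}\C^{n\times k}$ (approximated by finite Grassmannians as in Section~\ref{section:proof}). The bundle $E^{\oplus k}$ is pulled back from $\mathcal E$ by a bundle map $\tilde f\colon E^{\oplus k}\to\mathcal E$ covering the classifying map $f\colon B\to BGL(n)$ of $E$, and $\Sigma_{\CC}=\tilde f^{-1}(\mathcal Y)$. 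Hence, by functoriality of fundamental classes under this (transverse) pullback, $[\Sigma_{\CC}]=\tilde f^*[Y_{\CC}]^*$.

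Finally I would restrict along $\sigma$. Under the transversality hypothesis the intersection $\sigma^{-1}(\Sigma_{\CC})$ has the expected codimension and $[\sigma^{-1}(\Sigma_{\CC})]=\sigma^*[\Sigma_{\CC}]$ in $H^*(B)$. Since $\sigma$ is a section of the vector bundle projection $\pi$, it is a homotopy inverse of $\pi$; composing $\tilde f\circ\sigma\colon B\to\mathcal E$ with the projection $\mathcal E\to BGL(n)$ returns $f$, and contractibility of the fiber $\C^{n\times k}$ forces $\sigma^*\tilde f^*[Y_{\CC}]^*=f^*[Y_{\CC}]^*$. This last class is exactly $[Y_{\CC}]^*$ with each $c_i$ replaced by the Chern class $c_i(E)=f^*c_i$, which is the assertion.

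The main obstacle is to make the phrase ``a certain transversality property'' precise, since $Y_{\CC}$, and therefore $\Sigma_{\CC}$, is typically singular. What one really needs is that $\sigma$ meets $\Sigma_{\CC}$ in the expected codimension and interacts with its singular stratum mildly enough that $\sigma^{-1}(\Sigma_{\CC})$ carries a fundamental class equal to $\sigma^*[\Sigma_{\CC}]$. As in the proof of Theorem~\ref{coeff_GW}, the clean way to guarantee this is a Kleiman--Bertini style genericity argument applied to a sufficiently large family of sections; abstractly, the required hypothesis is exactly the condition under which the pullback formula for the fundamental class holds.
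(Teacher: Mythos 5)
Your proposal is correct and takes essentially the same route as the paper: the paper's entire proof is the phrase ``by the definition of $GL(n)$-equivariant cohomology,'' and your Borel-construction argument (assembling the sections into one section of $E^{\oplus k}$, pulling back the universal subvariety along the classifying map, and using contractibility of the fiber to identify $\sigma^*\tilde f^*$ with $f^*$) is precisely the unpacking of that definition. Your closing caveat---that the unspecified transversality hypothesis is by definition whatever makes $[\sigma^{-1}(\Sigma_{\CC})]=\sigma^*[\Sigma_{\CC}]$ valid---is likewise the same point the paper defers with ``the transversality property can be easily phrased.''
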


The transversality property can be easily phrased. Over the real numbers it is a generic property of $k$-tuples of sections. Thus one obtains a result on the parity of (the cohomology class represented by) the points over which $k$ generic sections of a real projective space bundle form a given configuration $\CC$.

\begin{remark} \rm
Certain facts suggest some kind of relations between the $c$ and the $d$ variables of $[Y_{\CC}]$. One of these facts is that either one can be used to calculate the degree of $Y_{\CC}$---hence they can not be independent. More generally, it can be shown that $[Y_{\CC}]$ can be written as a polynomial of the weights $\gamma_i-d_j$ of the representation in Section \ref{equivariant_class_section}. Another fact is that for matrix Schubert varieties the pure $c$ part determines the whole $[Y_{\CC}]$ (up to permutation of indexes). Since the pure $c$ part of $[Y_{\CC}]$ can be presented in a more compact way in general, it would be interesting to see the relation in general. However, we may not expect that eg. the pure $c$ part determines the pure $d$ part for any $\CC$. For example, let $\CC_1$ be the configuration of $7$ points on the projective plane with the collinearities $123$, $145$, and $167$ (and otherwise general). Let $\CC_2$ be with the collinearities $123$, $345$, $567$ (and otherwise general). The pure $c$ part of the equivariant classes of both of these are $c_1^3$. The pure $d$ parts are essentially different (see Theorem~\ref{schur_tp}).
\end{remark}

\section{Stabilization}\label{dstab}

The interior structure of natural infinite sequences of equivariant cohomology classes of geometrically relevant varieties has remarkable
connections with the theory of symmetric functions \cite{nakajima_book}, and iterated residue identities for hyperplane arrangements \cite{bsz06, thom_series}. In this section we make the first step towards exploring similar relations for the classes $[Y_{\CC}]$, by showing the property analogous with the so-called d-stability property of Thom polynomials of contact singularities, cf. \cite[Sect. 7.3]{thom_series}. A byproduct---important in Section \ref{calculation}---is Theorem \ref{width} on the vanishing of certain coefficients of $[Y_{\CC}]$.

Let $\CC$ be a configuration of $k$ vectors in $\C^s$, spanning $\C^s$, and let $\codim (Y_{\CC}\subset \C^{ s\times k})=l$.
For $n\geq s$ let $\CC^{\#n}$ be obtained from $\CC$ through the natural embedding of $\C^s$ into $\C^n$. Hence $\CC^{\#n}$ is a configuration of $k$ points in $\C^{n}$, spanning an $s$ dimensional subspace. It is easy to check that $\codim (Y_{\CC^{\#n}}\subset \C^{n\times k})=l+(k-s)(n-s)$.
Our goal is to compare the classes
\begin{equation}\label{2classes}[Y_{\CC}]\in \Z[c_1,\ldots,c_s,d_1,\ldots,d_k]\qquad\hbox{and}\qquad [Y_{\CC^{\#n}}]\in \Z[c_1,\ldots,c_{n},d_1,\ldots,d_k].\end{equation}
The relation between the two classes must involve nontrivial algebra, since it involves nontrivial geometry---consider for example the case when $\CC$ is the collection of 4 generic vectors in $\C^2$. In this case $[Y_{\CC}]=1$ while $[Y_{\CC^{\#4}}]$ has a term $2d_1d_2d_3d_4$, whose coefficient is the solution of the Schubert problem of Example \ref{Sch_problem}.

Below we will work with the Chern roots $\gamma_i$ of $GL(m)$ ($m=s$ or $n$), instead of the Chern classes $c_i$. That is, we identify $H^*(BGL(m))=\Z[c_1,\ldots,c_m]$ with the symmetric polynomials of $\gamma_1,\ldots,\gamma_m$; where the $i$'th elementary symmetric polynomial is $c_i$. In our notation, hence, $Y_{\CC}$ may be a polynomial in $c_i$'s and $d_i$'s, or a polynomial in $\gamma_i$'s and $d_i$'s necessarily symmetric in the $\gamma_i$'s. If $S$ is an $s$-element subset of $[n]$, then $[Y_{\CC}](\gamma_S)$ will denote the the value of $[Y_{\CC}]$ if we substitute the variables $\gamma_i$, $i\in S$ for $\gamma_1,\ldots,\gamma_s$.

\begin{theorem} \label{lok}
 Let $\binom{[n]}{s}$ denote the set of $s$-element subsets of $[n]$. For such a subset $S$, let $\bar{S}$ denote the complement of $S$ in $[n]$. Then we have
\begin{equation}
[Y_{\CC^{\#n}}]=\sum_{S\in \binom{[n]}{s}} \frac{[Y_{\CC}](\gamma_S) \cdot \prod_{i\in \bar{S}} \prod_{j=1}^k (\gamma_i-d_j)}
{\prod_{i\in \bar{S}} \prod_{j\in S} (\gamma_i-\gamma_j)}.
\end{equation}
\end{theorem}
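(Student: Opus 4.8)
The plan is to realize $Y_{\CC^{\#n}}$ as the image of a resolution fibred over the Grassmannian $\Gr_s\C^n$ and then to read off its equivariant class by Atiyah--Bott localization for the maximal torus $T=T^n\times D(k)$ of $G_{n,k}$. Two features of the right hand side dictate the construction. The denominators $\prod_{i\in\bar S}\prod_{j\in S}(\gamma_i-\gamma_j)$ are exactly the equivariant Euler classes of the tangent spaces of $\Gr_s\C^n$ at its $T$-fixed points $W_S=\spa\{e_i:i\in S\}$, $S\in\binom{[n]}{s}$; and the factors $\prod_{i\in\bar S}\prod_{j=1}^k(\gamma_i-d_j)$ are the restrictions to $W_S$ of the Euler class of $\Hom(\C^k,\C^n/\tau_s)$. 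This tells us precisely which class on $\Gr_s\C^n$ to push forward.

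First I would construct the resolution. Over $\Gr_s\C^n$ let $\tau_s$ be the tautological bundle, write $\C^k=\bigoplus_{j=1}^k\C_{d_j}$ for the diagonal $D(k)$-module, and consider the sub-bundle $F=\Hom(\C^k,\tau_s)$ of the trivial bundle $\underline E=\Hom(\C^k,\C^n)$. Since $Y_{\CC}\subset\C^{s\times k}$ is invariant under $GL(s)\times D(k)$, it defines fibrewise a $G_{n,k}$-invariant subvariety $\widetilde Y\subset F$, a bundle over $\Gr_s\C^n$ with fibre $Y_{\CC}$. The composite $\rho\colon\widetilde Y\hookrightarrow\underline E\to E$ forgetting the subspace is proper, because $\Gr_s\C^n$ is compact, and it maps $\widetilde Y$ onto $Y_{\CC^{\#n}}$: a generic configuration in $X_{\CC^{\#n}}$ spans a unique $s$-dimensional subspace $W$ and lies in the copy of $X_{\CC}$ inside $\Hom(\C^k,W)$, so $\rho$ is generically one-to-one. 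Hence $\rho_*[\widetilde Y]=[Y_{\CC^{\#n}}]$, and since $E$ is contractible this pushforward amounts to integrating the class $[\widetilde Y\subset\underline E]\in H^*_T(\Gr_s\C^n)$ over the fibre $\Gr_s\C^n$.

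To evaluate the integral I would restrict to the fixed points. Restriction of an equivariant class of a sub-bundle variety to a fixed fibre yields the equivariant class of that fibre, so $[\widetilde Y\subset F]|_{W_S}$ is the $T$-class of the copy of $Y_{\CC}$ in $\Hom(\C^k,W_S)$; as $W_S$ supplies the Chern roots $\{\gamma_i\}_{i\in S}$ to the $GL(s)$-factor, this restriction equals $[Y_{\CC}](\gamma_S)$. Multiplying by the Euler class of the quotient bundle $\underline E/F=\Hom(\C^k,\C^n/\tau_s)$ --- the standard compatibility of equivariant classes with the inclusion of a sub-bundle --- gives $[\widetilde Y\subset\underline E]$, whose restriction to $W_S$ is
\[
[\widetilde Y\subset\underline E]|_{W_S}=[Y_{\CC}](\gamma_S)\cdot\prod_{i\in\bar S}\prod_{j=1}^k(\gamma_i-d_j).
\]
The localization formula then divides each such contribution by the tangent Euler class $\prod_{i\in\bar S}\prod_{j\in S}(\gamma_i-\gamma_j)$ of $\Gr_s\C^n$ at $W_S$ (the factor $D(k)$ acts trivially on the Grassmannian, so the $d_j$ do not enter the denominator), and summing over $S$ reproduces the asserted identity; the result is symmetric in the $\gamma_i$, hence a genuine $G_{n,k}$-class.

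The main obstacle is the birationality statement $\rho_*[\widetilde Y]=[Y_{\CC^{\#n}}]$: one must verify that a generic point of $Y_{\CC^{\#n}}$ has column span of dimension exactly $s$, so that the fibre of $\rho$ over it is a single reduced point. This holds because $X_{\CC^{\#n}}$ is dense in $Y_{\CC^{\#n}}$ and consists of configurations of rank $r_{\CC}([k])=s$, while rank $=s$ is an open condition; when $Y_{\CC}$ is reducible one applies the argument to each component and sums, both the left side and the localization formula being additive in $[Y_{\CC}]$. As a consistency check the degrees match: $[\widetilde Y\subset\underline E]$ has complex codimension $l+k(n-s)$, and integration over $\Gr_s\C^n$ lowers this by $\dim_{\C}\Gr_s\C^n=s(n-s)$, giving $l+(k-s)(n-s)=\codim(Y_{\CC^{\#n}}\subset\C^{n\times k})$. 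Finally, although each localized summand is only a rational function of the $\gamma_i$, their sum is forced back into $\Z[c_1,\ldots,c_n,d_1,\ldots,d_k]$ by the localization theorem, since it equals the honest polynomial $[Y_{\CC^{\#n}}]$.
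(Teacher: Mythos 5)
Your proposal is correct and follows essentially the same route as the paper: the paper also forms the fibrewise family $Y_{\CC}(\xi)\subset\Hom(\varepsilon^k,\tau_s)$ over $\Gr_s\C^n$, embeds it in the trivial bundle $\Hom(\varepsilon^k,\varepsilon^n)=\Gr_s\C^n\times\Hom(\C^k,\C^n)$, observes that the projection to $\Hom(\C^k,\C^n)$ is birational onto $Y_{\CC^{\#n}}$, and concludes by the fibered Atiyah--Bott localization theorem (Theorem 3.8 of \cite{bsz06}). The only difference is that you unwind that cited theorem explicitly (Euler class of the quotient bundle, restriction to the fixed points $W_S$, division by the tangent Euler classes), which the paper invokes as a black box.
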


\begin{proof}Let $\varepsilon^k$ denote the trivial bundle of rank $k$, and let $\tau_s$ be the tautological bundle (or rank $s$) over $Gr_s\C^n$.
The embedding of bundles $\tau_s\subset \varepsilon^n$ induces the embedding of bundles $i:\Hom(\varepsilon^k,\tau_s)\to \Hom(\varepsilon^k,\varepsilon^n)$. The maximal torus $U(1)^n\times U(1)^k$ of $G_{n,k}$ acts on the following diagram
\[
\xymatrix{
Y_{\CC}(\xi)\subset& \Hom(\varepsilon^k,\tau_s)  \ar@{^{(}->}@<-3pt>[rr]^{i}  \ar[drr]_{\xi} &  &  \Hom(\varepsilon^k,\varepsilon^n)\ar[d]^{\pi_1}\ar[r]^{\pi_2} & \Hom(\C^k,\C^n) & \supset Y_{\CC^{\#n}} \\
& & & Gr_s\C^n,  }
\]
where $\pi_1$ are $\pi_2$ the projections of $\Hom(\varepsilon^k,\varepsilon^n)=Gr_s\C^n\times \Hom(\C^k,\C^n)$, and $Y_{\CC}(\xi)$ is the collection of the $Y_{\CC}$-points in each fiber of $\xi$. The composition $\pi_2\circ i$ is a birational map from $Y_{\CC}(\xi)$ to $Y_{\CC^{\#n}}$. Therefore we can apply the fibered version of the Atiyah-Bott localization theorem, Theorem (3.8) in \cite{bsz06} (see also \cite[Prop.5.1]{thom_series}), and we obtain the theorem.
\end{proof}

Another relation between the classes (\ref{2classes}) stems from the following theorem.

\begin{theorem} \label{dstabCC} Let $n>s$, and $[Y_{\CC^{\#n}}]= \sum \gamma_{n}^i p_i(\gamma_1,\ldots,\gamma_{n-1},d_1,\ldots,d_k)$. Then
\begin{enumerate}
\item \label{egy} $p_i=0$ for $i>k-s$, and
\item \label{ketto} $p_{k-s}=k_{\CC}\cdot[Y_{\CC^{\#(n-1)}}]$ for an integer $k_{\CC}$.
\end{enumerate}
\end{theorem}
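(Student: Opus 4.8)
The plan is to extract both statements directly from the localization formula of Theorem~\ref{lok}, by treating its right-hand side as a polynomial in the single variable $\gamma_n$ (all other $\gamma_i$ and all $d_j$ fixed) and analysing its growth as $\gamma_n\to\infty$. Since $[Y_{\CC^{\#n}}]$ is symmetric in $\gamma_1,\ldots,\gamma_n$ it is a genuine polynomial, so its $\gamma_n$-degree equals the order of growth of the sum at infinity; concretely I would expand each summand of Theorem~\ref{lok} as a Laurent series in $\gamma_n$ at $\infty$, add the series, and use that the resulting sum has no negative powers of $\gamma_n$.

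Splitting $\binom{[n]}{s}$ according to whether $n\in\bar S$ or $n\in S$ does the bookkeeping. When $n\in\bar S$, i.e. $S\subseteq[n-1]$, the only $\gamma_n$-dependent factor is $\prod_{j=1}^k(\gamma_n-d_j)/\prod_{j\in S}(\gamma_n-\gamma_j)$, monic of degree $k$ over monic of degree $s$, so it equals $\gamma_n^{\,k-s}+O(\gamma_n^{\,k-s-1})$, while the complementary factor $[Y_\CC](\gamma_S)\prod_{i\in[n-1]\setminus S}\prod_j(\gamma_i-d_j)/\prod_{i\in[n-1]\setminus S}\prod_{j\in S}(\gamma_i-\gamma_j)$ is free of $\gamma_n$. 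When $n\in S$, the variable $\gamma_n$ occurs only inside $[Y_\CC](\gamma_S)$ and in the denominator factor $\prod_{i\in\bar S}(\gamma_i-\gamma_n)$, so such a summand grows like $\gamma_n^{\,\deg_\gamma[Y_\CC]-(n-s)}$, where $\deg_\gamma$ denotes the degree in one Chern root.

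Granting the width bound $\deg_\gamma[Y_\CC]\le k-s$ (discussed below), the $n\in S$ summands grow like $\gamma_n^{\,k-n}$ with $k-n<k-s$, so they can neither raise the $\gamma_n$-degree of the sum above $k-s$ nor contribute to the coefficient of $\gamma_n^{\,k-s}$. The first observation is exactly part~(\ref{egy}), $p_i=0$ for $i>k-s$. For the second, the coefficient $p_{k-s}$ of $\gamma_n^{\,k-s}$ is the sum, over all $S\subseteq[n-1]$ with $|S|=s$, of the $\gamma_n$-free factors displayed above; but this is precisely the right-hand side of Theorem~\ref{lok} written for the configuration $\CC^{\#(n-1)}$ in $\C^{n-1}$. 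Hence $p_{k-s}=[Y_{\CC^{\#(n-1)}}]$, which is part~(\ref{ketto}) and moreover pins down the constant to $k_\CC=1$.

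The one ingredient not supplied by Theorem~\ref{lok} is the width bound $\deg_\gamma[Y_\CC]\le k-s$, and this is the main obstacle; note it is equivalent to part~(\ref{egy}) in the boundary case $n=s+1$, so it cannot be obtained by the localization bookkeeping alone. I would prove it from the Schur-positivity of Theorem~\ref{general_coeffs}: writing $[Y_\CC]=\sum a_{\mu,\w}\,\Delta_\mu\prod_i(-d_i)^{w_i}$ with $a_{\mu,\w}\ge0$, one has $\deg_\gamma[Y_\CC]=\max\{\mu_1:a_{\mu,\w}\ne0\}$, so it suffices to show the intersection number $\#(V_\CC\cap S_{\mu',\w'})$ vanishes whenever $\mu_1>k-s$. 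This should follow from a dimension count reflecting that $\CC$ consists of only $k$ vectors spanning an $s$-dimensional space: a Schubert condition of width exceeding $k-s$ on the subspace coordinate $L$ is incompatible with the $k$ prescribed lines forming a configuration of type $\CC$ that spans $L$. An alternative, and perhaps cleaner, route is to argue geometrically as in the proof of Theorem~\ref{lok}: over the locus of $\Gr_s\C^n$ where the last Chern root is maximally used, the relative dimension of $\Hom(\varepsilon^k,\tau_s)\to\Gr_s\C^n$ forces $p_i=0$ for $i>k-s$, and restriction to the sub-Grassmannian $\Gr_s\C^{n-1}\subset\Gr_s\C^n$ identifies $p_{k-s}$ with $[Y_{\CC^{\#(n-1)}}]$ up to the integer $k_\CC$, which the computation above shows to be $1$.
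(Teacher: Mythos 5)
Your reduction of the theorem to a single-root degree bound is correct, and it is a genuinely different route from the paper's: the paper proves this theorem by a one-line appeal to the general d-stability theorem (Theorem 2.1 of \cite{dstab}), whereas you try to extract it from the localization formula of Theorem \ref{lok}. Your bookkeeping there is sound: for $S\subseteq[n-1]$ the summand is $\bigl(\gamma_n^{k-s}+O(\gamma_n^{k-s-1})\bigr)$ times a $\gamma_n$-free factor, these $\gamma_n$-free factors assemble exactly into the right-hand side of Theorem \ref{lok} written for $\CC^{\#(n-1)}$, and for $n\in S$ the summand has Laurent degree at most $\deg_\gamma[Y_\CC]-(n-s)$; so \emph{granting} $\deg_\gamma[Y_\CC]\le k-s$ you get both parts at once, and even the sharper statement $k_{\CC}=1$ (which the paper only establishes later, after the raising-operator theorem).

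The gap is the ingredient you yourself flag: the base-case bound $\deg_\gamma[Y_{\CC}]\le k-s$ for the spanning configuration in $\C^s$. This is not a technical loose end --- it is the entire substance of the theorem, and it is precisely what the cited Theorem 2.1 of \cite{dstab} supplies. Inside the paper it is unavailable: the width statement (Theorem \ref{width}) is \emph{deduced from} Theorem \ref{dstabCC}, so quoting it would be circular, and your localization argument, as you note, cannot bootstrap it. Neither of your two proposed substitutes is a proof. Route A (via the positivity of Theorem \ref{general_coeffs}) is the right idea, but ``this should follow from a dimension count'' is where the actual work lies: one needs a concrete geometric reason, e.g.\ that membership of $(L,l_1,\ldots,l_k)$ in $V_{\CC}$ depends on $L$ only through the intersection of $L^{\perp}$ with the $k$-dimensional span of the images of the $l_i$, so that $V_{\CC}$ is pulled back along a rational map to $\Gr_{k-s}\C^{k}$ and its class can only involve Schubert classes fitting in a $(k-s)\times s$ box --- together with the transversality justifications; none of this is in your text. (There is also a convention slip: with the paper's $\Delta_\mu=\det(c_{\mu_i+j-i})$ one has $\Delta_\mu=s_{\mu'}$ in the Chern roots, so $\deg_\gamma\Delta_\mu$ is the number of parts of $\mu$, not $\mu_1$.) Route B is vaguer still: ``the locus where the last Chern root is maximally used'' is not a defined object, and when one unwinds the pushforward over $\Gr_s\C^n$ it reduces to bounding the $\sigma$-degree of $[Y_{\CC}](\sigma)$ in each Chern root of $\tau_s$ --- i.e.\ to exactly the same unproved base case. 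So the proposal is an incomplete proof: a correct and informative reduction, with the essential step missing.
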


\begin{proof} Apply Theorem 2.1 of \cite{dstab}. \end{proof}

To explore the algebraic consequences of Theorem \ref{dstabCC} we define lowering and raising operators on constant width polynomials.

\begin{definition} The width of a monomial in $\Z[c_1,\ldots,c_n]$ is the number of factors in it. The width of a polynomial is the width of its widest term. Let $P^n_{w}$ be the vector space of width $\leq w$ polynomials in $\Z[c_1,\ldots,c_n]$. The lowering operator $L^n_{w}: P^{n+1}_{w} \to P^{n}_{w}$ is defined to be the linear extension of
$$L^n_{w}\left(c_{i_1}c_{i_2}\ldots c_{i_w} \right) = c_{i_1-1}c_{i_2-1}\ldots c_{i_w-1},$$
and $L^n_{w}(c_I)=0$ if the width of $c_I$ is less than $w$ ($c_0$ is defined to be 1). $L^n_w$ decreases the degree by $w$.
The raising operator (increasing the degree by $w$) $R^n_{w}: P^n_{w} \to P^{n+1}_{w}$ is defined by
$$p=\sum_\lambda a_\lambda \Delta_\lambda(c_1,\ldots,c_n) \qquad \mapsto \qquad \sum_\lambda a_\lambda \Delta_{(\lambda_1+1,\ldots,\lambda_{w}+1)}(c_1,\ldots,c_{n+1}),$$
where $\sum_\lambda a_\lambda \Delta_\lambda$ is the unique expression of $p$ as a linear combination of Schur polynomials corresponding to partitions with $w$ parts.
\end{definition}

For instance we have $L^2_{3}(c_1c_2c_3+5c_3^2)=c_1c_2$, and
$R^2_{3}(c_1c_2)=R^2_{3}(\Delta_{210}(c_1,c_2))=\Delta_{321}(c_1,c_2,c_3)=$ $c_1c_2c_3-c_3^2.$
We have the one-sided inverse property $L^n_{w}\circ R^n_{w}=$id, but not the other way around.

\begin{theorem} \label{width}
Let $\CC$ be a configuration of $k$ vectors in $\C^n$, spanning an $s$ dimensional subspace.
\begin{itemize}
\item The width of $[Y_{\CC}]^*$ is at most $k-s$.
\item If $[Y_{\CC}]^*$ is written in the Schur basis $\Delta_{\lambda}$, then all occurring $\lambda$ have at most $k-s$ parts.
\end{itemize}
\end{theorem}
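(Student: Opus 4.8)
The plan is to convert both bullets into a single statement about the degree of $[Y_\CC]^*$ in each Chern root, and then obtain that degree bound from the vanishing in Theorem~\ref{dstabCC}(\ref{egy}). Throughout I regard $[Y_\CC]^*$ as a symmetric polynomial in the Chern roots $\gamma_1,\ldots,\gamma_n$ (with $c_i=e_i(\gamma)$).

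First I would set up the dictionary. By the dual Jacobi--Trudi identity one has $\Delta_\lambda=s_{\lambda'}$, where $s_\mu$ is the ordinary Schur function and $\lambda'$ is the conjugate partition; hence $\Delta_\lambda=s_{\lambda'}(\gamma_1,\ldots,\gamma_n)$ has degree $(\lambda')_1=\ell(\lambda)$ in each variable $\gamma_j$, and its width is at most $\ell(\lambda)$. For a symmetric polynomial $P$ and an integer $w$ I would then record the equivalence of: (i) $\mathrm{width}(P)\le w$; (ii) only partitions $\lambda$ with $\ell(\lambda)\le w$ occur in the expansion $P=\sum a_\lambda\Delta_\lambda$; and (iii) $\deg_{\gamma_j}P\le w$ for all $j$. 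Here (i)$\Rightarrow$(iii) holds because each $c_i=e_i$ has per-variable degree $1$; (iii)$\Leftrightarrow$(ii) holds because $\{s_\mu:\mu_1\le w\}$ is a basis of the space of symmetric polynomials of per-variable degree $\le w$, which under $s_\mu=\Delta_{\mu'}$ is exactly $\{\Delta_\lambda:\ell(\lambda)\le w\}$; and (ii)$\Rightarrow$(i) is the width bound for $\Delta_\lambda$ just noted. Thus both bullets (with $w=k-s$) reduce to the single claim $\deg_{\gamma_j}[Y_\CC]^*\le k-s$.

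Next I would dispose of the main case $n>s$. Writing $\CC=\CC_0^{\#n}$ with $\CC_0$ spanning $\C^s$, Theorem~\ref{dstabCC}(\ref{egy}) gives $[Y_\CC]=\sum_{i=0}^{k-s}\gamma_n^i\,p_i$, so $\deg_{\gamma_n}[Y_\CC]\le k-s$. Specializing $d_i=0$ only kills monomials, so $\deg_{\gamma_n}[Y_\CC]^*\le k-s$, and since $[Y_\CC]^*$ is symmetric in $\gamma_1,\ldots,\gamma_n$ the same holds for every $\gamma_j$. For the base case $n=s$ (which is precisely the range of the Menelaus, Ceva, Pappus and Desargues examples, where the bound is sharp) I would embed one dimension higher: the case just proved gives $\deg_{\gamma_j}[Y_{\CC^{\#(s+1)}}]^*\le k-s$ for all $j$, and by Theorem~\ref{dstabCC}(\ref{ketto}) the coefficient of $\gamma_{s+1}^{\,k-s}$ in $[Y_{\CC^{\#(s+1)}}]$ is $k_\CC\cdot[Y_\CC]$. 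Setting $d_i=0$, this coefficient equals $k_\CC\cdot[Y_\CC]^*$ and, being a coefficient of a polynomial of per-variable degree $\le k-s$, has per-variable degree $\le k-s$ in $\gamma_1,\ldots,\gamma_s$; dividing by $k_\CC$ yields the claim.

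The main obstacle is twofold. Conceptually, the delicate point is getting the symmetric-function dictionary oriented correctly: width and per-variable degree must be matched through the \emph{conjugate} partition via $\Delta_\lambda=s_{\lambda'}$, because the relevant quantity is $\ell(\lambda)$ and not $\lambda_1$; an off-by-a-transpose error would produce the false bound on $\lambda_1$. Technically, the base case $n=s$ rests on the nonvanishing of the constant $k_\CC$ of Theorem~\ref{dstabCC}(\ref{ketto}); this should follow from the positivity of the leading multiplicity in Theorem~2.1 of \cite{dstab}, and one would invoke that to finish. Granting this, the remainder is a direct combination of Theorem~\ref{dstabCC} with the $S_n$-symmetry of the characteristic class $[Y_\CC]^*$.
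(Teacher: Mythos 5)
Your reduction to per-variable degree and your treatment of the main case $n>s$ are correct and are essentially the paper's own argument: the paper's lowering operator $L^n_i$ applied to a width-$i$ polynomial is exactly extraction of the coefficient of $\gamma_n^i$, so its one-line proof (a width-$i$ term with $i>k-s$ would make $L^n_i([Y_{\CC}]^*)\neq 0$, contradicting Theorem \ref{dstabCC}(\ref{egy})) is the same as your bound $\deg_{\gamma_n}[Y_{\CC}]\le k-s$ plus $S_n$-symmetry, and your conjugate-partition dictionary is what the paper compresses into ``the second statement is a combinatorial rephrasing of the first.''

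The gap is the base case $n=s$, which you correctly isolate but do not close. Theorem \ref{dstabCC} is stated only for $n>s$, so it says nothing directly about $[Y_{\CC}]^*$ when $\CC$ spans all of $\C^n$---and this is precisely the case of the Menelaus, Ceva, Pappus and Desargues classes, and the case in which Theorem \ref{width} is later applied (in the proof of the last theorem of Section \ref{dstab}). Your fix via Theorem \ref{dstabCC}(\ref{ketto}) needs $k_{\CC}\neq 0$, and that is not available: the statement (quoting \cite{dstab}) asserts only that $k_{\CC}$ is \emph{some} integer, and the paper establishes $k_{\CC}=1$ only afterwards, as a consequence of Theorem \ref{width} itself via Theorem \ref{combinat_lemma}, so appealing to that fact would be circular; your hoped-for ``positivity of the leading multiplicity in Theorem 2.1 of \cite{dstab}'' is an unverified external claim which the present paper conspicuously does not invoke, even though it would have simplified its own exposition. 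Note also that the remaining available tool, Theorem \ref{lok} at $d_i=0$, cannot substitute for it: the localization operator (for $n=s+1$) sends $s_\mu(\gamma_1,\ldots,\gamma_s)$ to the straightening of $s_{(k-s,\mu_1,\ldots,\mu_s)}(\gamma_1,\ldots,\gamma_{s+1})$, which vanishes whenever some $\mu_j=k-s+j$; in particular, width-$(k-s+1)$ terms of $[Y_{\CC}]^*$ leave no trace in $[Y_{\CC^{\#(s+1)}}]^*$, so the width bound one dimension up does not descend. (To be fair, the paper's own terse proof does not separate the case $n=s$ either; but a complete proof of the theorem as stated needs it, and in your proposal it rests on an unestablished nonvanishing.)
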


\begin{proof} If $[Y_{\CC}]^*$ had a term of width $i>k-s$, then $L^n_i([Y_{\CC}]^*)$ would not be 0, contradicting to
Theorem \ref{dstabCC} (\ref{egy}) (cf. \cite[Cor.2.5]{dstab}). This proves the first statement. The second is a combinatorial rephrasing of the first one.
\end{proof}

\begin{theorem} \label{combinat_lemma}
Let $n,k\geq s$, and let us use the notations of Theorem \ref{lok}. Let $\lambda$ be a partition with at most $k-s$ parts. Then we have
\begin{equation}
R^{n-1}_{k-s}\circ \ldots \circ R^{s+1}_{k-s} \circ R^{s}_{k-s} \left( \Delta_{\lambda}(c_1,\ldots,c_s)\right)=
\sum_{S\in \binom{[n]}{s}} \frac{\Delta_\lambda(\gamma_S) \cdot \prod_{i\in \bar{S}} \gamma_i^k}   
{\prod_{i\in \bar{S}} \prod_{j\in S} (\gamma_i-\gamma_j)}.
\end{equation}
\end{theorem}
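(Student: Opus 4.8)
The plan is to reduce both sides to one and the same Schur polynomial in $\gamma_1,\ldots,\gamma_n$, using the convention fixed in Section~\ref{dstab} that $c_i$ is the $i$-th elementary symmetric function of the $\gamma$'s, so that by the dual Jacobi--Trudi formula $\Delta_\mu(c_1,\ldots,c_m)=s_{\mu'}(\gamma_1,\ldots,\gamma_m)$ is the Schur polynomial of the conjugate partition $\mu'$. First I would dispose of the left-hand side: by the definition of $R^\bullet_{k-s}$ the composite $R^{n-1}_{k-s}\circ\cdots\circ R^{s}_{k-s}$ simply adds $1$ to each of the first $k-s$ parts $n-s$ times, so
\[
R^{n-1}_{k-s}\circ\cdots\circ R^{s}_{k-s}\bigl(\Delta_\lambda(c_1,\ldots,c_s)\bigr)=\Delta_\nu(c_1,\ldots,c_n),\qquad \nu=(\lambda_1+n-s,\ldots,\lambda_{k-s}+n-s),
\]
with $\lambda$ padded by zeros to $k-s$ parts. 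A direct conjugation gives $\nu'=\bigl((k-s)^{\,n-s},\lambda'_1,\lambda'_2,\ldots\bigr)$, i.e. $\nu'$ is $\lambda'$ with an $(n-s)\times(k-s)$ rectangle prepended; hence the left-hand side is $s_{\nu'}(\gamma_1,\ldots,\gamma_n)$, and it remains to identify the right-hand localization sum with this polynomial.

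For the right-hand side I would pass to alternants. Writing $V(x_1,\ldots,x_m)=\prod_{a<b}(x_a-x_b)$, $\delta_s=(s-1,\ldots,0)$, and $a_\alpha=\det(x_i^{\alpha_j})$, I use $\Delta_\lambda(\gamma_S)=s_{\lambda'}(\gamma_S)=a_{\lambda'+\delta_s}(\gamma_S)/V(\gamma_S)$ together with the elementary observation that $\prod_{i\in\bar S}\gamma_i^{\,k}\cdot V(\gamma_{\bar S})=a_\beta(\gamma_{\bar S})$ for $\beta=(k+n-s-1,\ldots,k)$. Multiplying each summand's numerator and denominator by $V(\gamma_{\bar S})$ turns it into
\[
\frac{a_{\lambda'+\delta_s}(\gamma_S)\,a_\beta(\gamma_{\bar S})}{V(\gamma_S)\,V(\gamma_{\bar S})\prod_{i\in\bar S,\,j\in S}(\gamma_i-\gamma_j)}=\varepsilon(S)\,\frac{a_{\lambda'+\delta_s}(\gamma_S)\,a_\beta(\gamma_{\bar S})}{V(\gamma_1,\ldots,\gamma_n)},
\]
where $\varepsilon(S)=\pm1$ is the shuffle sign from the factorization of the full Vandermonde. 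Summing over $S\in\binom{[n]}{s}$, the numerators assemble by the generalized Laplace expansion along the first $s$ columns into a single determinant: letting $\widehat M$ be the $n\times n$ matrix whose rows are indexed by $\gamma_1,\ldots,\gamma_n$ and whose columns carry the exponents $\lambda'_b+s-b$ ($b=1,\ldots,s$) followed by $\beta_c=k+n-s-c$ ($c=1,\ldots,n-s$), the right-hand side becomes $\det\widehat M/V(\gamma_1,\ldots,\gamma_n)$.

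Finally I would read off the partition. The column exponents of $\widehat M$ form the disjoint union $\{\lambda'_b+s-b\}\cup\{k,k+1,\ldots,k+n-s-1\}$; the hypothesis that $\lambda$ has at most $k-s$ parts means $\lambda'_1\le k-s$, so the largest first-block exponent is $\lambda'_1+s-1\le k-1$, strictly below the second block. Hence the two blocks are disjoint and already correctly interleaved, and sorting them decreasingly reproduces exactly $\{\nu'_j+n-j\}_{j=1}^{n}$ for the $\nu'$ above; therefore $\det\widehat M=a_{\nu'+\delta_n}(\gamma_1,\ldots,\gamma_n)$ and $\det\widehat M/V=s_{\nu'}$, which matches the left-hand side. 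The main technical obstacle I foresee is bookkeeping rather than conceptual: verifying that the shuffle sign $\varepsilon(S)$ from splitting the Vandermonde coincides with the sign produced by the Laplace expansion, so that the summands glue into $\det\widehat M$ without stray signs; note that this is precisely the step at which the bound on the number of parts of $\lambda$ enters essentially, by keeping the two exponent blocks separated. A conceptually cleaner alternative would be to recognize the right-hand side as the Atiyah--Bott localization expression on $\Gr_s\C^n$ for the pushforward of the tautological class $\Delta_\lambda(\tau_s)$ twisted by $\prod_i\gamma_i^{\,k}$, mirroring the proof of Theorem~\ref{lok}; since the present statement is purely combinatorial, I would keep the alternant argument as the primary proof.
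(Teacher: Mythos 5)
Your proposal is correct, but it proves the identity by a genuinely different route than the paper. The paper's proof is geometric and very short given its machinery: by Theorem \ref{schur_tp}, $\Delta_\lambda(c_1,\ldots,c_s)$ is the class $[Y_{\CC}]^*$ of a suitable matrix Schubert variety; Theorem \ref{lok} (itself proved by fibered Atiyah--Bott localization) identifies the right-hand side of Theorem \ref{combinat_lemma} as the $GL(n)$-equivariant class of the stabilized matrix Schubert variety; and a second application of Theorem \ref{schur_tp}, ``checking the indexes'', turns that class into the left-hand side. Your argument instead stays entirely inside the ring of symmetric functions: bialternant form of Schur polynomials, splitting of the Vandermonde along $S\sqcup\bar S$, and generalized Laplace expansion. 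This is precisely the kind of proof the paper itself requests in the remark following the theorem (``It would be interesting to find a combinatorial proof\dots''), and your shuffle-sign/Laplace computation is in effect the multivariate Lagrange interpolation argument alluded to there. What the paper's route buys is brevity and the absence of any sign bookkeeping (both sides arise as classes of one and the same variety); what yours buys is self-containedness---no transversality, no localization theorem---and transparency about where the hypothesis that $\lambda$ has at most $k-s$ parts enters, namely in keeping the two exponent blocks disjoint, a point the paper's proof hides inside ``checking the indexes''. Note that your suggested ``cleaner alternative'' at the end (reading the right-hand side as an Atiyah--Bott pushforward) is essentially the paper's actual proof, so keeping the alternant argument primary is what makes your contribution distinct.

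One remark to close the sign issue you flagged, since as stated two of your intermediate claims are each off by a (cancelling) constant sign. With your column order ($\lambda'$-block in columns $1,\ldots,s$, $\beta$-block after), the shuffle sign $\varepsilon(S)$ and the Laplace-expansion sign do \emph{not} coincide: writing $m(S)=\#\{(a,b)\in S\times\bar S:\, a<b\}$, one has $\varepsilon(S)=(-1)^{m(S)}$, and for $S=\{i_1<\cdots<i_s\}$ the parity of $m(S)$ differs from that of the Laplace sign exponent $\sum_t i_t-\binom{s+1}{2}$ by the constant $s(n-s)$. For the same reason $\det\widehat M$ is not $a_{\nu'+\delta_n}$ but $(-1)^{s(n-s)}a_{\nu'+\delta_n}$: sorting the exponents decreasingly moves the $n-s$ columns of the $\beta$-block past the $s$ columns of the $\lambda'$-block, and the determinant picks up the sign of that permutation. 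The two constants cancel, so your conclusion stands exactly as claimed; alternatively, listing the $\beta$-block first and expanding along the last $s$ columns makes both stray signs disappear simultaneously.
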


\begin{proof} The polynomial $\Delta_\lambda(c_1,\ldots,c_s)$ is the equivariant class $[Y_{\CC}]^*$ of an appropriate matrix Schubert variety, according to Theorem \ref{schur_tp}. Then Theorem \ref{lok} gives that the right hand side is the $GL(n)$ equivariant class of another matrix Schubert variety. Checking the indexes, and applying Theorem \ref{schur_tp} again we obtain the left hand side.
\end{proof}

It would be interesting to find a combinatorial proof of this theorem, possibly along the line of the multivariate Lagrange interpolation formula for
symmetric functions \cite{chen_louck}.

Finally,  we have the description of the relation between the pure $c$ parts of the equivariant classes (\ref{2classes}).

\begin{theorem}
Let $\CC$ be a configuration of $k$ vectors in $\C^s$, spanning $\C^s$. 
Let $n\geq s$, and let $\CC^{\#n}$ be obtained from $\CC$ by the natural embedding $\C^s\subset \C^n$. Then $[Y_{\CC^{\#n}}]^*$ has width at most $k-s$, and
$$[Y_{\CC^{\#n}}]^*=R^{n-1}_{k-s}\circ\ldots\circ R^{s+1}_{k-s} \circ R^s_{k-s}\left( [Y_{\CC}]^* \right).$$
\end{theorem}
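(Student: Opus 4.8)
The plan is to deduce this statement purely formally by combining the localization formula of Theorem \ref{lok}, the width bound of Theorem \ref{width}, and the interpolation identity of Theorem \ref{combinat_lemma}. All of the geometric content has already been extracted in those three results, so what remains is a careful specialization to the pure $c$ part followed by an appeal to linearity of the raising operators.

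First I would specialize the identity of Theorem \ref{lok} to the pure $c$ part by applying the ring homomorphism sending every $d_j$ to $0$. Since this homomorphism touches only the $d$ variables, while the Chern-root substitution $\gamma_i\mapsto\gamma_S$ touches only the $\gamma$ variables, the two operations commute; hence $[Y_{\CC}](\gamma_S)|_{d=0}=[Y_{\CC}]^*(\gamma_S)$, and the factor $\prod_{i\in\bar S}\prod_{j=1}^k(\gamma_i-d_j)$ collapses to $\prod_{i\in\bar S}\gamma_i^k$. Because the right-hand side of Theorem \ref{lok} is a genuine polynomial, this specialization is a valid polynomial identity, yielding
$$[Y_{\CC^{\#n}}]^* = \sum_{S\in\binom{[n]}{s}} \frac{[Y_{\CC}]^*(\gamma_S)\cdot\prod_{i\in\bar S}\gamma_i^k}{\prod_{i\in\bar S}\prod_{j\in S}(\gamma_i-\gamma_j)}.$$

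Next I would invoke Theorem \ref{width} applied to the configuration $\CC$, which spans $\C^s$, to write $[Y_{\CC}]^*=\sum_\lambda a_\lambda\,\Delta_\lambda(c_1,\ldots,c_s)$ with every partition $\lambda$ having at most $k-s$ parts. Substituting this expansion, interchanging the two finite sums over $S$ and over $\lambda$, and recognizing each resulting inner sum over $S$ as exactly the right-hand side of Theorem \ref{combinat_lemma}, I obtain
$$[Y_{\CC^{\#n}}]^* = \sum_\lambda a_\lambda\, R^{n-1}_{k-s}\circ\cdots\circ R^{s}_{k-s}\bigl(\Delta_\lambda(c_1,\ldots,c_s)\bigr).$$
The hypothesis of Theorem \ref{combinat_lemma} that $\lambda$ has at most $k-s$ parts is precisely what the Schur expansion furnished by Theorem \ref{width} guarantees, so the term-by-term application is legitimate. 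Finally, since each raising operator is linear and a composition of linear maps is linear, the finite sum $\sum_\lambda a_\lambda(\cdot)$ passes inside the operators, giving $[Y_{\CC^{\#n}}]^* = R^{n-1}_{k-s}\circ\cdots\circ R^{s}_{k-s}([Y_{\CC}]^*)$. The width claim then follows immediately, either from Theorem \ref{width} applied to $\CC^{\#n}$ itself (a configuration of $k$ vectors in $\C^n$ spanning an $s$-dimensional subspace) or simply from the observation that the image of $R^{n-1}_{k-s}$ lies in $P^{n}_{k-s}$.

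I do not anticipate a genuine obstacle here, as the three cited theorems do all the heavy lifting; the proof is an assembly rather than a new argument. The only points requiring care are bookkeeping ones: verifying that the specialization $d=0$ commutes with the Chern-root substitution $\gamma_i\mapsto\gamma_S$ so that $[Y_{\CC}]^*(\gamma_S)$ appears cleanly, and confirming that the number of parts controlled by Theorem \ref{width} matches the subscript $k-s$ of the raising operators in Theorem \ref{combinat_lemma}, so that the inner sums can be identified term by term before linearity is applied.
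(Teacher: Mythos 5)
Your proposal is correct and takes essentially the same route as the paper's own proof: expand $[Y_{\CC}]^*$ in the Schur basis via Theorem \ref{width}, apply the localization operation $p\mapsto \sum_{S} p\cdot\prod_{i\in \bar{S}} \gamma_i^k / \prod_{i\in \bar{S}} \prod_{j\in S} (\gamma_i-\gamma_j)$ (which is Theorem \ref{lok} specialized at $d_j=0$), and identify the result term by term through Theorem \ref{combinat_lemma}. The only difference is that you make explicit the $d=0$ specialization and the linearity bookkeeping that the paper's three-line proof leaves implicit.
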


\begin{proof} According to Theorem \ref{width} we can express $[Y_{\CC}]^*$ as a linear combination of $\Delta_\lambda$ polynomials with each $\lambda$ having at most $k-s$ parts. Let us apply the operation
$p\mapsto \sum_{S} \frac{ p \cdot \prod_{i\in \bar{S}} \gamma_i^k    }
{\prod_{i\in \bar{S}} \prod_{j\in S} (\gamma_i-\gamma_j)}$
to this expression. Theorem \ref{combinat_lemma} gives our result.
\end{proof}

In particular the constant $k_{\CC}$ above is 1. Furthermore, the pure $c$ part of $[Y_{\CC}]$ determines $[Y_{\CC^{\#n}}]$ by adding a $(k-s)\times (n-s)$ rectangle to each $\lambda$ in its Schur expansion. The analogous phenomenon for equivariant classes of contact singularities is called the ``finiteness of Thom series'', see \cite{thom_series}. However, finiteness of Thom series seems more the exception than the rule for contact singularities. The only known finite Thom series correspond to a trivial case (the algebras $\Z[x_1,\ldots,x_n]/(x_1,\ldots,x_n)^2$), the Giambelli-Thom-Porteous formula.

Denoting the Menelaus configuration considered in a subspace of $\C^n$ ($n\geq 3$) by $\CC_M^{\#n}$ we obtain that
$$[Y_{\CC_M^{\#n}}]^*=3\Delta_{n-1,n-2,n-2}+2\Delta_{n-1,n-1,n-3}+ 3\Delta_{n,n-2,n-3}.$$

\begin{remark}\rm The only property of the representation of Section \ref{equivariant_class_section} used in this section was that this representation is a quiver representation. Hence the suitable rephrasing of the localization, width, and vanishing results above are valid for all quiver representations.
\end{remark}

\section{The calculation of equivariant classes of matrix matroid
varieties}\label{calculation}

The standard straightforward methods to calculate equivariant classes of invariant subvari\-e\-ties---such as the method of resolution or
(Gr\"obner) degeneration---assume more knowledge on the ideal of the variety than we have about the ideal of matrix matroid varieties.

What we can do is listing certain properties of the class $[Y_{\CC}]$, and hope that a computer search proves that there is only one element of the polynomial ring $\Z[c_1,\ldots,c_n,d_1,\ldots,d_k]$ that satisfies all these properties. The main such property---which we will call Interpolation property---is motivated by methods used in the theory of Thom polynomials of singularities.

\subsection{Interpolation}

For a configuration $\DD\in \C^{n\times k}$ let $G_{\DD}$ denote its stabilizer subgroup in $G_{n,k}$. The embedding $G_{\DD} \to G_{n,k}$ induces a map between classifying spaces $BG_{\DD} \to BG_{n,k}$, and, in turn, a homomorphism between rings of characteristic classes $\phi_{\DD}: H^*(BG_{n,k}) \to H^*(BG_{\DD})$.

\begin{theorem}\label{restriction} \cite[Th.3.2]{cr} If $\DD\not\in Y_{\CC}$ then $\phi_{\DD} ( [Y_{\CC}]) =0$. \end{theorem}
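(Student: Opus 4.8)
The plan is to realize $\phi_{\DD}$ as the restriction of the equivariant class $[Y_{\CC}]$ to the $G_{\DD}$-fixed point $\DD$, and then to exploit that $[Y_{\CC}]$ is \emph{supported} on $Y_{\CC}$: it is the image of a relative equivariant Thom class under the natural map $j^*\colon H^*_{G_{\DD}}(V,\,V\setminus Y_{\CC})\to H^*_{G_{\DD}}(V)$, where $V=\C^{n\times k}$. Since $\DD$ lies in the open complement $V\setminus Y_{\CC}$, restriction to $\DD$ kills everything in the image of $j^*$.

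First I would fix the identifications. The straight-line homotopy $(x,t)\mapsto (1-t)x+t\DD$ is $G_{\DD}$-equivariant, as $G_{\DD}$ fixes $\DD$, and contracts $V$ onto $\DD$; hence restriction to $\DD$ is an isomorphism $H^*_{G_{\DD}}(V)\xrightarrow{\sim}H^*_{G_{\DD}}(\{\DD\})=H^*(BG_{\DD})$. Combined with $H^*_{G_{n,k}}(V)\cong H^*(BG_{n,k})$, this exhibits $\phi_{\DD}$ as the forgetful map $H^*_{G_{n,k}}(V)\to H^*_{G_{\DD}}(V)$ (restricting the structure group from $G_{n,k}$ to $G_{\DD}$) followed by restriction to $\DD$. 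In particular $\phi_{\DD}([Y_{\CC}])$ is the value at $\DD$ of the class $[Y_{\CC}]|_{G_{\DD}}\in H^*_{G_{\DD}}(V)$.

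Next I would run the exactness argument. By the very definition of the equivariant class, $[Y_{\CC}]=j^*(u)$ for a class $u\in H^*_{G_{n,k}}(V,\,V\setminus Y_{\CC})$; the forgetful map is compatible with the long exact sequences of the pair, so $[Y_{\CC}]|_{G_{\DD}}=j^*(u|_{G_{\DD}})$ in $H^*_{G_{\DD}}(V)$. Write $i\colon V\setminus Y_{\CC}\hookrightarrow V$ for the open inclusion and $r\colon\{\DD\}\hookrightarrow V\setminus Y_{\CC}$ for the inclusion of $\DD$---available precisely because $\DD\notin Y_{\CC}$. Then restriction to $\DD$ factors as $r^*\circ i^*$, and exactness gives $i^*\circ j^*=0$, so
\[
\phi_{\DD}([Y_{\CC}])=r^*\bigl(i^*([Y_{\CC}]|_{G_{\DD}})\bigr)=r^*\bigl(i^*j^*(u|_{G_{\DD}})\bigr)=0.
\]

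The step requiring genuine care is the supportedness used above: producing the relative class $u$ and verifying its naturality under restriction of the structure group at the level of pairs. In the Borel model one passes to finite-dimensional approximations $V\times_{G}E_NG$ and takes the equivariant Poincar\'e dual of the closed invariant subvariety, which lives a priori in the cohomology of the pair $\bigl(V\times_G E_NG,\,(V\setminus Y_{\CC})\times_G E_NG\bigr)$; one must check that replacing $G_{n,k}$ by $G_{\DD}$ commutes with forming this relative class. This is routine---both groups deformation retract onto their maximal compact subgroups, so non-compactness is harmless---and it is the only substantive point, the surrounding exactness being purely formal.
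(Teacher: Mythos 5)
Your proof is correct, and it is essentially the argument behind the paper's own treatment: the paper does not prove this statement in the text but cites it as \cite[Th.3.2]{cr}, and the proof there is exactly your support-plus-exactness argument --- realize $[Y_{\CC}]$ as the image of a relative class in $H^*_{G}(V,\,V\setminus Y_{\CC})$, identify $\phi_{\DD}$ with restriction to the $G_{\DD}$-fixed point $\DD$, and use that this restriction factors through $H^*_{G_{\DD}}(V\setminus Y_{\CC})$, where the image of $j^*$ dies. Your preliminary checks (equivariant contractibility of $V$ onto $\DD$, compatibility of the forgetful map with the pair sequence) are the right points to verify and are handled correctly.
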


Theorem \ref{restriction} is a homogeneous interpolating condition on $[Y_{\CC}]$. To obtain a non-trivial condition, however, we need to find a configuration $\DD$, outside of $Y_{\CC}$, with reasonably large symmetry group. Let us illustrate the usage of this theorem with an example.

\begin{example} \label{menelaus_comp} The calculation of $[Y_{\CC_M}]$. \rm
Consider the following configuration $\DD_{1|2|6}$: $v_1$, $v_2$, and $v_6$ are three generic vectors in $\C^3$, while $v_3=v_4=v_5=0\in \C^3$.
Clearly $\DD_{1|2|6}$ is not in the closure of $X_{\CC_M}$, since for all configurations in $X_{\CC_M}$ the vectors $v_1, v_2$, and $v_6$ are coplanar.
Therefore $\phi_{\DD_{1|2|6}}([Y_{\CC_M}])=0$. The stabilizer subgroup of $\DD_{1|2|6}$ is $U(1)^6$ with the embedding into $G_{3,6}$ via
$(\diag(\alpha,\beta,\gamma),\diag(\alpha,\beta,\delta,\eta,
\theta,\gamma))$. Hence---by abusing language and identifying general elements of a $U(1)$ with the first Chern class of $U(1)$---the map $\phi_{\DD_{1|2|6}}:\Z[c_1,c_1,c_3,d_1,\ldots,d_6] \to \Z[\alpha,\beta,\gamma,\delta,\eta,\theta]$ maps
$$c_1\mapsto \alpha+\beta+\gamma,\qquad c_2\mapsto \alpha\beta+\alpha\gamma+\beta\gamma,\qquad c_3\mapsto \alpha\beta\gamma,$$
$$d_1\mapsto \alpha,\qquad d_2\mapsto \beta,\qquad d_3\mapsto \delta,\qquad d_4\mapsto\eta, \qquad d_5\mapsto \theta,\qquad d_6\mapsto \gamma.$$
The vanishing of $[Y_{\CC_M}]$ at this map is a non-trivial interpolation property of $[Y_{\CC_M}]$. In fact, one finds that the degree 4 part of the intersection of the kernels of the $\phi_{\DD_{1|2|6}}$, $\phi_{\DD_{1|3|5}}$, $\phi_{\DD_{2|3|4}}$, and $\phi_{\DD_{4|5|6}}$ is 2-dimensional. (This is not surprising in the light of the decomposition (\ref{menelaus_decomposition}).) Now let $\DD_{124|356}$ be the following configuration: $v_1=v_2=v_4$ and $v_3=v_5=v_6$ are two different nonzero vectors in $\C^3$. This configuration is not in the closure of $X_{\CC_M}$ because of Menelaus' theorem. Indeed, the left hand side of (\ref{menelaus identity}) is $-1$ for any configuration in $X_{\CC_M}$, but it is $\infty$ for $\DD_{124|356}$. As a consequence,
$[Y_{\CC_M}]$ must vanish at the map $\phi_{\DD_{124|356}}:\Z[c_1,c_1,c_3,d_1,\ldots,d_6] \to \Z[\alpha,\beta,\gamma]$,
$$c_1\mapsto \alpha+\beta+\gamma,\qquad c_2\mapsto \alpha\beta+\alpha\gamma+\beta\gamma,\qquad c_3\mapsto \alpha\beta\gamma,$$
$$d_1\mapsto \alpha,\qquad d_2\mapsto \alpha,\qquad d_3\mapsto \beta,\qquad d_4\mapsto\alpha, \qquad d_5\mapsto \beta,\qquad d_6\mapsto \beta.$$
It turns out that there is only a 1-dimensional set of
degree 4 polynomials in $\Z[c_1,c_1,c_3,d_1,\ldots,d_6]$ vanishing at the five maps $\phi_{\DD_{1|2|6}}$, $\phi_{\DD_{1|3|5}}$, $\phi_{\DD_{2|3|4}}$, $\phi_{\DD_{4|5|6}}$, and $\phi_{\DD_{124|356}}$. Normalization is achieved, for example, by observing that the coefficient of $d_3^2d_6^2$ in $[Y_{\CC_M}]$ has to be 1, due to Theorem \ref{coeff_GW}.
\end{example}

Observe that the application of Theorem \ref{restriction} to calculate $[Y_{\CC}]$ rhymes to the method of determining the ideal of $Y_{\CC}$ discussed at the end of Section \ref{examples:ideal}. That is, we first deal with the trivial conditions following from the closed conditions on $X_{\CC}$, then need to work with some extra equations besides these naive ones. What makes the equivariant cohomology calculation easier is that here we do not have to have a full understanding of {\em all} the fake components, or {\em all} the extra geometry theorems of the configuration. It is enough to find {\em some} of these, use these to find an interpolations constraint. And it is clear when we can stop: as soon as we find enough interpolation constraints to cut down the dimension of the solution set to 1, we can be sure we found $[Y_{\CC}]$.

\subsection{Calculation in practice}\label{calprac}

The three main conditions we may use to calculate the equivariant class $[Y_{\CC}]$ are
\begin{itemize}
\item the interpolation conditions, Theorem \ref{restriction};
\item the enumerative conditions, Theorem \ref{coeff_GW};
\item the width condition, Theorem \ref{width}.
\end{itemize}
The first one depends on the choice of the test configuration $\DD$. The second one depends on the choice of the numbers $\q$. For certain choices of $\DD$ and $\q$ these conditions are far from being straightforward, because we do not know whether $\DD$ belongs to $Y_{\CC}$, or the number $N(\CC;\q)$. For some other choices, however, simple arguments answer these questions, and hence we have explicit constraints of $[Y_{\CC}]$. See, for example, the calculation of $[Y_{\CC_M}]$ above.

It is quite possible that the interpolation conditions themselves are enough to determine the equivariant class $[Y_{\CC}]$ up to a scalar. For some other representations the analogous statement is a theorem, eg. \cite[Thm. 3.5]{cr}. However the proof there depends on a condition of the representation (called Euler condition in \cite[Def. 3.3]{cr}, closely related to the ``equivariantly perfect'' condition of \cite[Sect.1.]{ab}). This condition does not hold for the representation of Section~\ref{equivariant_class_section}.

What works in practice, is the combination of the three constraints. For all the configurations the authors considered (much more than the ones presented in this paper) there is only one polynomial of degree $\codim Y_{\CC}$ in $\Z[c_1,\ldots,c_n,d_1,\ldots,d_k]$ satisfying the simple straightforward constraints obtained from interpolation and enumeration, together with the width condition. We conjecture this holds for all configurations.

\section{Hierarchy}\label{hierarchy}

The interpolation method highlights the importance of the hierarchy of the sets $Y_{\CC}$. In fact, the effective usage of the interpolation method to calculate $[Y_{\CC}]$ assumes that we have another configuration $\DD$ such that $\DD \not\in Y_{\CC}$. To indicate the non-triviality of this problem we challenge the reader with the problem of deciding whether the configuration $\DD_{134|256}$ is contained in $Y_{\CC_C}$ (for notations see Figure \ref{menelaus ceva_figure} and Example \ref{menelaus_comp}).

The hierarchy of the sets $X_{\CC}$ is not ``normal'', in the sense that there are examples of configurations $\CC$ and $\DD$ such that certain
points of $X_{\DD}$ are in the closure of $X_{\CC}$, some others are not. A small example is $\CC=\CC_M$, $\DD=$6 points on one line. Hence, we restrict our attention to the case when $\DD$ is an orbit of the action in Section \ref{equivariant_class_section}. In this case Theorem \ref{restriction} yields the following: if $\phi_{\DD}([Y_{\CC}])\not=0$ then $Y_{\DD}\subset Y_{\CC}$.

The vanishing of $\phi_{\DD}([Y_{\CC}])$ has no chance of determining the adjacency of $Y_{\CC}$ and $Y_{\DD}$ if the stabilizer group $G_{\DD}$ is trivial (ie. it is the kernel of the representation, $U(1)$). However, when $G_{\DD}$ is larger, we have found no counterexample to the following conjecture.

\begin{conjecture}
Let $\CC$ and $\DD$ be configurations of $k$ points in $\C^n$. Suppose that projectivizations of the non-zero vectors in $\DD$ form a projectively independent set (hence of cardinality $\leq n$). Then
$$Y_{\DD} \subset Y_{\CC}\qquad  \Leftrightarrow \qquad \phi_{\DD}([Y_{\CC}])\not=0.$$
\end{conjecture}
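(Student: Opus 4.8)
The plan is to prove the two implications separately; the implication $\Leftarrow$ is essentially already in Section~\ref{hierarchy}, while $\Rightarrow$ carries the real content. First I would record why projective independence makes everything well-posed: if the non-zero vectors of $\DD$ are projectively independent, then any two configurations with the same zero-pattern and the same (necessarily maximal) independence are related by an element of $G_{n,k}$, so $X_{\DD}$ is a single $G_{n,k}$-orbit and $Y_{\DD}=\overline{G_{n,k}\cdot\DD}$. The implication $\phi_{\DD}([Y_{\CC}])\ne 0\Rightarrow Y_{\DD}\subset Y_{\CC}$ is then just the contrapositive of Theorem~\ref{restriction}: if $\DD\notin Y_{\CC}$ then $\phi_{\DD}([Y_{\CC}])=0$, so $\phi_{\DD}([Y_{\CC}])\ne 0$ forces $\DD\in Y_{\CC}$, and since $Y_{\CC}$ is closed and $G_{n,k}$-invariant it then contains $\overline{G_{n,k}\cdot\DD}=Y_{\DD}$.

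For the hard direction assume $Y_{\DD}\subset Y_{\CC}$; in particular $\DD\in Y_{\CC}$, and being fixed by its own stabilizer, $\DD$ is a $G_{\DD}$-fixed point of the smooth space $\C^{n\times k}$. The linear contraction $M\mapsto \DD+t(M-\DD)$ is $G_{\DD}$-equivariant (here one uses $g\cdot\DD=\DD$), so $\C^{n\times k}$ deformation retracts $G_{\DD}$-equivariantly to $\DD$ and the restriction map $H^*_{G_{\DD}}(\C^{n\times k})\to H^*(BG_{\DD})$ is an isomorphism. Under this isomorphism the $G_{\DD}$-equivariant class $\phi_{\DD}([Y_{\CC}])$ is identified with the $G_{\DD}$-equivariant multidegree of the tangent cone $C:=C_{\DD}Y_{\CC}\subset T_{\DD}\C^{n\times k}\cong\C^{n\times k}$, by the fixed-point/tangent-cone principle for multidegrees (cf.\ \cite[8.5]{miller-sturmfels} and the localization setup of \cite{cr}). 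Since $\DD\in Y_{\CC}$, the cone $C$ is a \emph{non-empty} $G_{\DD}$-invariant subvariety of codimension $l$, so the whole statement reduces to showing that the equivariant multidegree of a non-empty invariant cone is non-zero.

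This last reduction is the crux, and it is genuinely delicate: already for the two coordinate axes in $\C^2$ under the weights $\pm1$ the multidegree of a non-empty cone vanishes, so one must exploit the geometry of the stabilizer. My plan is a positivity argument. The weights of $\C^{n\times k}$ are the $\gamma_i-d_j$; restricted to $G_{\DD}$ some of them become $0$, precisely along the directions tangent to the orbit $G_{n,k}\cdot\DD\subset Y_{\DD}\subset Y_{\CC}$, which therefore already lie inside $C$. After splitting off this zero-weight subspace one is left with a multidegree in the non-zero ``normal'' weights, and it suffices to produce a one-parameter subgroup $\rho\subset G_{\DD}$ that contracts the normal space, i.e.\ pairs strictly positively with all of its weights. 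For such a $\rho$ every product-of-weights term in a $G_{\DD}$-invariant Gr\"obner degeneration of $C$ evaluates positively, and since all multiplicities are positive, $\phi_{\DD}([Y_{\CC}])$ evaluates to a strictly positive number at $\rho$ and is in particular non-zero.

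The main obstacle is exactly the existence of such a contracting $\rho$: one must show that the non-zero weights of the stabilizer representation on the normal space to the orbit all lie in an open half-space. This is where projective independence should be decisive, since it pins the stabilizer down to a product of general linear groups acting on a block-structured matrix space with a transparent weight pattern of the shape (row character) $-$ (column character); I would try to read off an adapted $\rho$ directly from a compatible linear ordering of these characters. Controlling this weight geometry in full generality, and thereby ruling out the cancellation phenomenon illustrated by the $\pm1$ example, is precisely what keeps the statement at the level of a conjecture rather than a theorem.
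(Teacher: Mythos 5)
First, a point of orientation: this statement is stated in the paper as a \emph{conjecture}; the authors give no proof, only the remark (Section~\ref{hierarchy}) that when $X_{\DD}$ is a single orbit, Theorem~\ref{restriction} yields the implication $\phi_{\DD}([Y_{\CC}])\neq 0 \Rightarrow Y_{\DD}\subset Y_{\CC}$. Your treatment of that direction is correct and coincides with the paper's: projective independence makes $X_{\DD}$ one $G_{n,k}$-orbit, so $Y_{\DD}=\overline{G_{n,k}\cdot\DD}\subset Y_{\CC}$ once $\DD\in Y_{\CC}$. Your reduction of the hard direction to the non-vanishing of the $G_{\DD}$-equivariant class of the tangent cone $C_{\DD}Y_{\CC}$ is also sound (translation by the fixed point $\DD$ and the Rees degeneration are $G_{\DD}$-equivariant and class-preserving). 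So the proposal stands or falls with the final positivity step, which you yourself leave open; the problem is that your diagnosis of that step is off in both directions.

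In the multiplicity-free case (all non-zero vectors of $\DD$ pairwise non-proportional, hence linearly independent), the contracting $\rho$ you worry about exists trivially: take $A=tI_n$ in $GL(n)$ and the diagonal matrix with $t$ in the occupied columns and $1$ in the zero columns. Then $Y_{\DD}$ is the coordinate subspace $T$ of matrices supported on the occupied columns, every weight of $T$ evaluates to $0$ at $\rho$, and every normal weight evaluates to $1$. However, your claim that ``every product-of-weights term in a Gr\"obner degeneration of $C$ evaluates positively'' is then false: components of the degeneration \emph{not} containing $T$ have among their normal weights some weight lying in $T$, which evaluates to $0$. The repaired argument is: all terms evaluate non-negatively, and at least one is positive, because $T=C_{\DD}Y_{\DD}\subset C_{\DD}Y_{\CC}$ by monotonicity of tangent cones, $T$ is irreducible, and by the Kalkbrener--Sturmfels equidimensionality theorem for initial ideals of primes some codimension-$l$ coordinate component of the monomial degeneration must contain $T$, contributing its (positive) multiplicity. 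With these repairs your strategy does appear to settle the multiplicity-free case.

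The genuine gap is case (b), which your plan cannot reach. The conjecture's hypothesis is about the \emph{set} of projectivizations, so repeated points are allowed; the configurations $\DD_{124|356}$ of Example~\ref{menelaus_comp} and $\DD_{134|256}$ of Section~\ref{hierarchy} are exactly of this kind, and they are the cases the paper singles out as hard. If two distinct projective points of $\DD$ each occur with multiplicity $\geq 2$, write $\Lambda_a$ for the common row/column character attached to the $a$-th point in the stabilizer torus; then both $\Lambda_a-\Lambda_b$ and $\Lambda_b-\Lambda_a$ occur as weights of the normal space to the orbit (a column repeating the $b$-th point contributes the row direction of the $a$-th point, and vice versa). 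Hence no one-parameter subgroup pairs positively with all normal weights, the half-space hypothesis fails structurally, and the cancellation phenomenon of your $\pm 1$ example cannot be excluded by any positivity argument of this type. So the obstacle is not ``controlling the weight geometry in full generality''---the weight geometry is provably bad precisely in the cases that keep the statement at the level of a conjecture, and a proof there would need an input beyond multidegree positivity.
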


Together with the effective algorithm of Section \ref{calprac} computing $[Y_{\CC}]$, this conjecture would serve as a computable criterion of
hierarchy, cf. \cite{zsolt}.

\bibliography{matroid}
\bibliographystyle{alpha}

\end{document}